\newcommand\cprime\textquotesingle 
\tikzset{->-/.style={decoration={markings,mark=at position #1 with {\color{black}\arrow{>}}},postaction={decorate,very thick}}}
\tikzstyle{vertex}=[circle, draw, inner sep=0pt, minimum size=6pt]
\providecommand \@dotsep{5} \def\listtodoname{List of Todos} \def\listoftodos{\@starttoc{tdo}\listtodoname} \makeatother 
\patchcmd{\@startsection}{\@afterindenttrue}{\@afterindentfalse}{}{}             
\patchcmd{\part}{\bfseries}{\bfseries\LARGE}{}{}
\patchcmd{\section}{\scshape}{\bfseries}{}{}\renewcommand{\@secnumfont}{\bfseries} 
\patchcmd{\@settitle}{\uppercasenonmath\@title}{\large}{}{}
\patchcmd{\@setauthors}{\MakeUppercase}{}{}{}
\theoremstyle{plain}
\newtheorem{thm}{Theorem}[section]
\newtheorem{cor}[thm]{Corollary}
\newtheorem{lemma}[thm]{Lemma}
\newtheorem{prop}[thm]{Proposition}
\newtheorem*{thm*}{Theorem}
\newtheorem*{lem*}{Lemma}
\theoremstyle{definition}
\newtheorem{df}[thm]{Definition}
\newtheorem{rem}[thm]{Remark}
\newtheorem{ex}[thm]{Example}
\newtheorem*{df*}{Definition}
\newtheorem*{ex*}{Example}
\newtheorem*{rem*}{Remark}
\DeclareRobustCommand{\gobblefour}[5]{}    
\DeclareFontFamily{OT1}{pzc}{}                                
\DeclareFontShape{OT1}{pzc}{m}{it}{<-> s * [1.10] pzcmi7t}{}
\DeclareMathAlphabet{\mathpzc}{OT1}{pzc}{m}{it}
\DeclareSymbolFont{sfoperators}{OT1}{bch}{m}{n} \DeclareSymbolFontAlphabet{\mathsf}{sfoperators} \makeatletter\def\operator@font{\mathgroup\symsfoperators}\makeatother 
\DeclareSymbolFont{cmletters}{OML}{cmm}{m}{it}
\DeclareSymbolFont{cmsymbols}{OMS}{cmsy}{m}{n}
\DeclareSymbolFont{cmlargesymbols}{OMX}{cmex}{m}{n}
\DeclareMathSymbol{\myjmath}{\mathord}{cmletters}{"7C}     \let\jmath\myjmath 
\DeclareMathSymbol{\myamalg}{\mathbin}{cmsymbols}{"71}     
\DeclareMathSymbol{\mycoprod}{\mathop}{cmlargesymbols}{"60}
\DeclareMathSymbol{\myalpha}{\mathord}{cmletters}{"0B}     \let\alpha\myalpha 
\DeclareMathSymbol{\mybeta}{\mathord}{cmletters}{"0C}      \let\beta\mybeta
\DeclareMathSymbol{\mygamma}{\mathord}{cmletters}{"0D}     \let\gamma\mygamma
\DeclareMathSymbol{\mydelta}{\mathord}{cmletters}{"0E}     \let\delta\mydelta
\DeclareMathSymbol{\myepsilon}{\mathord}{cmletters}{"0F}   \let\epsilon\myepsilon
\DeclareMathSymbol{\myzeta}{\mathord}{cmletters}{"10}      \let\zeta\myzeta
\DeclareMathSymbol{\myeta}{\mathord}{cmletters}{"11}       \let\eta\myeta
\DeclareMathSymbol{\mytheta}{\mathord}{cmletters}{"12}     \let\theta\mytheta
\DeclareMathSymbol{\myiota}{\mathord}{cmletters}{"13}      \let\iota\myiota
\DeclareMathSymbol{\mykappa}{\mathord}{cmletters}{"14}     \let\kappa\mykappa
\DeclareMathSymbol{\mylambda}{\mathord}{cmletters}{"15}    \let\lambda\mylambda
\DeclareMathSymbol{\mymu}{\mathord}{cmletters}{"16}        \let\mu\mymu
\DeclareMathSymbol{\mynu}{\mathord}{cmletters}{"17}        \let\nu\mynu
\DeclareMathSymbol{\myxi}{\mathord}{cmletters}{"18}        \let\xi\myxi
\DeclareMathSymbol{\mypi}{\mathord}{cmletters}{"19}        \let\pi\mypi
\DeclareMathSymbol{\myrho}{\mathord}{cmletters}{"1A}       \let\rho\myrho
\DeclareMathSymbol{\mysigma}{\mathord}{cmletters}{"1B}     \let\sigma\mysigma
\DeclareMathSymbol{\mytau}{\mathord}{cmletters}{"1C}       \let\tau\mytau
\DeclareMathSymbol{\myupsilon}{\mathord}{cmletters}{"1D}   \let\upsilon\myupsilon
\DeclareMathSymbol{\myphi}{\mathord}{cmletters}{"1E}       \let\phi\myphi
\DeclareMathSymbol{\mychi}{\mathord}{cmletters}{"1F}       \let\chi\mychi
\DeclareMathSymbol{\mypsi}{\mathord}{cmletters}{"20}       \let\psi\mypsi
\DeclareMathSymbol{\myomega}{\mathord}{cmletters}{"21}     \let\omega\myomega
\DeclareMathSymbol{\myvarepsilon}{\mathord}{cmletters}{"22}\let\varepsilon\myvarepsilon
\DeclareMathSymbol{\myvartheta}{\mathord}{cmletters}{"23}  \let\vartheta\myvartheta
\DeclareMathSymbol{\myvarpi}{\mathord}{cmletters}{"24}     \let\varpi\myvarpi
\DeclareMathSymbol{\myvarrho}{\mathord}{cmletters}{"25}    \let\varrho\myvarrho
\DeclareMathSymbol{\myvarsigma}{\mathord}{cmletters}{"26}  \let\varsigma\myvarsigma
\DeclareMathSymbol{\myvarphi}{\mathord}{cmletters}{"27}    \let\varphi\myvarphi
\DeclareMathOperator{\Hom}{Hom}
\DeclareMathOperator{\GL}{GL}
\DeclareMathOperator{\rk}{rk}
\DeclareMathOperator{\Coh}{{Coh}}
\DeclareMathOperator{\Bun}{{Bun}}
\DeclareMathOperator{\PBun}{{\mathbb{P}Bun}}
\DeclareMathOperator{\Pic}{{Pic}}
\DeclareMathOperator{\Ext}{{Ext}}
\newcommand\C{{\mathbb C}}
\newcommand\FF{{\mathbb F}}
\newcommand\F{{\mathcal F}}
\newcommand\G{{\mathcal G}}
\newcommand\N{{\mathbb N}}
\renewcommand\P{{\mathbb P}}
\newcommand\Q{{\mathbb Q}}
\newcommand\Z{{\mathbb Z}}
\newcommand\cA{{\mathcal A}}
\newcommand\cG{{\mathcal G}}
\newcommand\cH{{\mathcal H}}
\newcommand\cK{{\mathcal K}}
\newcommand\cO{{\mathcal O}}
\newcommand{\E}{\mathcal E}
\newcommand{\Line}{\mathcal L}
\newcommand{\Fq}{\mathbb{F}_q}
\DeclareRobustCommand\bigop[1]{%
  \mathop{\vphantom{\sum}\mathpalette\bigop@{#1}}\slimits@
}
\newcommand{\bigop@}[2]{%
  \vcenter{%
    \sbox\z@{$#1\sum$}%
    \hbox{\resizebox{\ifx#1\displaystyle.9\fi\dimexpr\ht\z@+\dp\z@}{!}{$\m@th#2$}}%
  }%
}
\renewcommand\geq{\geqslant}
\renewcommand\leq{\leqslant}
\renewcommand\emptyset\varnothing
\title{ Hecke modifications of vector bundles}
\author{Roberto Alvarenga}
\address{\rm Roberto Alvarenga, São Paulo State University (UNESP), São José do Rio Preto, Brazil.}
\email{roberto.alvarenga@unesp.br}
\author{Inder Kaur}
\address{\rm Inder Kaur, School of Mathematics and Statistics, University of Glasgow, G12 8QQ , UK}
\email{inder.kaur@glasgow.ac.uk}
\author{Leonardo Moço}
\address{\rm Leonardo Moço, Instituto de Ci\^encias Matem\'aticas e de Computa\c{c}\~ao - USP, S\~ao Carlos, Brazil.}
\email{leonardo.moco@icmc.usp.br}
\begin{document}

\begin{abstract} Hecke modifications of vector bundles have played a significant role in several areas of mathematics. They appear in subjects ranging from number theory to complex geometry. This article intends to be a friendly introduction to the subject. We give an overview of how Hecke modifications appear in the literature, explain their origin and their importance in number theory and classical algebraic geometry. Moreover, we report the progress made in describing Hecke modifications explicitly and why these explicit descriptions are important. We describe all the Hecke modifications of the trivial rank $2$ vector bundle over a closed point of degree $5$ in the projective line, as well as all the vector bundles over a certain elliptic curve, which admit a rank $2$ and degree $0$ trace bundle as a Hecke modification. This result is not present in existing literature.  
\end{abstract}

\maketitle

\section{Introduction}

Let $X$ be a smooth projective curve defined over an arbitrary field $\mathbb{F}$. Let $D$ be an effective divisor on $X$. Given $\E, \E'$ two vector bundles (locally free sheaves) of the same rank over $X$, roughly speaking, we say that $\E'$ is a Hecke modification of $\E$  
at $D$ if $\E'$ is contained in $\E$ (as a locally free sheaf) with $\E/\E'$ isomorphic to the structural sheaf at $D$.  In this essay, we suppose that $D$ is supported in a single closed point of $X$. 

Hecke modifications of vector bundles have been studied, at least implicitly, since Weil \cite{weil-38} and as we shall see, appear naturally in different branches of pure mathematics. The name \textit{Hecke} comes from their connection with number theory, as it is related to the action of Hecke operators on the space of automorphic forms, see \cite{harder-67}. Hence, it plays an important role in the geometric Langlands correspondence, proved by Drinfeld and Lafforgue for $\GL_n$ over global function fields, cf.\  \cite{drinfeld} and \cite{lafforgue-02}.
In the remarkable work \cite{tyurin-69}, Hecke modifications appear in Tyurin's parametrization of the set of fixed rank and degree vector bundles over a Riemann surface.

In number theory, the explicit descriptions of Hecke modifications allow us to explicitly calculate the action of Hecke operators on automorphic forms. This has been used to investigate the space of (unramified) automorphic forms and its subspaces spanned by eigenforms, cusp forms and toroidal forms. Among other things, one can use Hecke modifications to describe the support of cusp forms, compute its dimension and obtain a new proof for the Riemann Hypothesis for curves defined over a finite field. For more content in this direction, see \cite{oliver-toroidal} and \cite{oliver-elliptic}. When $\mathbb{F}$  is a local field, an analogous theory has been developed in \cite{frenkel-20} and \cite{etingof-frenkel-kazhdan-23}. Recently, May 2024, a team of nine mathematicians led by Dennis Gaitsgory and Sam Raskin, claimed a proof of the geometric Langlands correspondence in the characteristic zero case.

In algebraic geometry, Hecke modifications have been used as a tool for decades beginning with \cite{narasimhan-ramanan-78}, where they are used to study the moduli space of rank $2$ vector bundles with fixed determinant and in \cite{mehta-seshadri-80}, where a connection between Hecke modifications and the parabolic structure of a given vector bundle is established. This connection continues to be explored, as is  evident from recent works such as \cite{Araujo2021automorphisms}, \cite{alfaya-gomez-21} and \cite{he-walpuski-19}.

\subsection*{Intention and scope of this article} 
The purpose of this survey is to provide a friendly introduction to Hecke modifications of vector bundles and how they appear in various contexts in the literature. We consider vector bundles defined over a smooth projective curve defined over a field $\mathbb{F}$, which is most of the time, either the complex field or a finite field.   
The article is inspired by the (in progress) joint works between the authors.

In the second section,  we introduce the main subject of this work and describe its first properties. The third section is devoted to explaining the origin of the name \textit{Hecke modifications} and its connection with number theory. In the fourth section, we exhibit the importance of the calculation of explicit Hecke modifications in both cases where either $\mathbb{F}$ is a finite field or the complex field. When $\mathbb{F}$ is a finite field, we introduce the theory of Hall algebras and show how to use it to calculate the Hecke modifications of a given vector bundle. The last section is devoted to another interpretation of Hecke modifications of rank $2$ vector bundles over the complex numbers: the elementary transformations. 
Throughout the article, we focus our attention on $\GL_n$ bundles over a smooth projective curve defined over the field $\mathbb{F}$. However, most of the definitions and results also work if we replace $\GL_n$  by $G$ a (semisimple) reductive connected algebraic group. 

\section{The name of the game} 

Let $X$ be a smooth projective curve defined over an arbitrary field $\mathbb{F}$. By $|X|$ we
denote the set of closed points of $X$. Given $x \in |X|$ and $ r \in \N_{\geq 1}$, let $\cK_{x}^{\oplus r}$ be the skyscraper sheaf supported at $x$ with stalk $\FF(x)^{\oplus r}$, where $\FF(x)$ is the residue field of $x$. We denote by $\mathrm{Coh} X$ the category of coherent sheaves on $X$ and by $\mathrm{Bun}_n X$ the set of isomorphism classes of rank $n$ vector bundles (of any degree) on $X$.  

We shall consider vector bundles on $X$ as locally free sheaves on $X$. Hence, we consider $\mathrm{Bun}_n X$ to be embedded in the category $\Coh (X)$ of coherent sheaves over $X$, cf.\  \cite[Ex. II.5.18]{hartshorne}. We denote $\Bun_1 X$ by $\Pic X$, which is an abelian group with tensor product as the group operation.

For a fixed $\F \in \Coh (X)$,  two exact sequences of coherent sheaves on $X$
\[0 \longrightarrow \F_1 \longrightarrow \F \longrightarrow \F_{2} \longrightarrow 0 \hspace{0.2cm}\text{ and } \hspace{0.2cm} 0 \longrightarrow \F_{1}' \longrightarrow \F \longrightarrow \F_2' \longrightarrow 0\]  
are isomorphic, if there are isomorphisms $\F_1 \rightarrow \F_1'$ and $\F_2 \rightarrow \F_2'$ such that 
$$\xymatrix@R5pt@C7pt{ 0 \ar[rr] && \F_1 \ar[rr] \ar[dd]^{\cong} && \F \ar[rr] \ar@{=}[dd] && \F_2 \ar[rr] \ar[dd]^{\cong} && 0 \\
&& && && && \\
0 \ar[rr] && \F_1' \ar[rr] && \F \ar[rr] && \F_2' \ar[rr] && 0 } $$
commutes. 

\begin{df} If $\E,\E' \in \mathrm{Bun}_n X$, $x \in |X|$ and $r \in \N_{\geq 1}$,
we denote by $[\E' \xrightarrow[r]{x} \E]$ the isomorphism class of a short exact sequence
\[0 \longrightarrow \E' \longrightarrow \E \longrightarrow \cK_{x}^{\oplus r} \longrightarrow 0. \]
 Moreover, we say that \emph{$\E'$ is a Hecke modification of $\E$} (or \emph{$\E$ is Hecke modified in $\E'$}) at $x$ with weight $r$. When $r=1$, we denote $[\E' \xrightarrow[r]{x} \E]$ simply by $[\E' \xrightarrow[]{x} \E]$. 
\end{df}

\begin{rem}
     Let $\E,\E' \in \mathrm{Bun}_n X$, $x \in |X|$ and $r \in \N_{\geq 1}$. The requirement that $\E'$ is a Hecke modification of $\E$ at $x$ with weight $r$, is equivalent to having a morphism $\xi: \E' \rightarrow \E$ which satisfies the following conditions:
     \begin{enumerate}
         \item the induced maps on stalk level $\xi_y: \E_{y}' \rightarrow \E_{y}$ are isomorphisms for every $y \in X $, $y \neq x$; and
         \item close to $x$, choosing bases $\E_x \cong \cO_{X,x}^{\oplus n}$ and $\E_x' \cong \cO_{X,x}^{\oplus n}$, we must have the following commutative diagram 
         \[\xymatrix@R5pt@C7pt{  \E_x' \ar[rr]^{\xi_x} \ar[dd]^{\cong} && \E_x \ar[dd]^{\cong} \\
&& \\
\cO_{X,x}^{\oplus n} \ar[rr]^{\overline{\xi}_x} && \cO_{X,x}^{\oplus n}  } \]
where 
\[ \overline{\xi}_x = 
\begin{pmatrix}
I_{n-r} & \\
 & \pi_x I_r 
\end{pmatrix}\]
$I_k$ is the $k \times k$ identity matrix (the empty entries in a matrix means zero entries) and $\pi_x \in \cO_{X,x}$ is an uniformizer at $x$.
\end{enumerate}
Observe that $(1)$ and $(2)$ ensure that $\xi$ is injective, since it is injective at level of stalks, cf.\ \cite[Prop. 1.1]{hartshorne}. 
\end{rem}

\begin{df} Let $\mathrm{H}_{x}^{r}(X)$ be the set of isomorphism classes of Hecke modifications in $X$ at $x$ of weight $r$, i.e., 
\[ \mathrm{H}_{x}^{r}(X) := \big\{ [\E' \xrightarrow[r]{x} \E] \;\big|\; \E,\E' \in \Bun_n X\big\}.\]
\end{df}

\begin{rem} \label{rem-heckestack}
    The above set may be seen as the set of $\mathbb{F}$-rational points of the \textit{Hecke stack}. We do not  give here the definition of Hecke stack, since we would like to keep this survey as accessible as possible. The interested reader may consult \cite[Sec. 3.2 of Chap. 12]{dennis-03}.  
\end{rem}

    Note that there are two natural maps from $\mathrm{H}_{x}^{r}(X)$ to $\Bun_n X$:  
 \begin{eqnarray*}
     h^{\leftarrow}: \quad    & \mathrm{H}_{x}^{r}(X)    
 & \longrightarrow \quad \Bun_n X \\
        & [\E' \xrightarrow[r]{x} \E]    & \longmapsto  \quad \E'        \end{eqnarray*}

and
 \begin{eqnarray*}
    h^{\rightarrow}: \quad &  \mathrm{H}_{x}^{r}(X) &  \longrightarrow \quad  \Bun_n X \\
      & [\E' \xrightarrow[r]{x} \E]  & \longmapsto \quad \E.
 \end{eqnarray*}
Let $[\E' \xrightarrow[r]{x} \E] \in \mathrm{H}_{x}^{r}(X)$, passing to the stalk at $x$ we obtain 
\[ 0 \rightarrow \E_{x}' \rightarrow \E_{x} \rightarrow \mathbb{F}(x)^{\oplus r} \rightarrow 0.\]
The above short exact sequence is no longer injective on the fiber at $x$ (i.e., after tensoring by $\mathbb{F}(x)$). Thus, we may write the  restriction to the fiber at $x$ as
\[ 0 \rightarrow \ker(\E_{x}' \otimes \mathbb{F}(x) \rightarrow \E_{x} \otimes \mathbb{F}(x) ) \rightarrow \E_{x}' \otimes \mathbb{F}(x) \rightarrow \E_{x} \otimes \mathbb{F}(x) \rightarrow \mathbb{F}(x)^{\oplus r} \rightarrow 0.\]
The previous construction sends $[\E' \xrightarrow[r]{x} \E]$ to a subspace of dimension $r$ in $\E_{x}' \otimes \mathbb{F}(x) $ and to a subspace of dimension $n-r$ in 
$\E_{x} \otimes \mathbb{F}(x).$ 

Conversely, given a subspace $V \subset \E_{x}' \otimes \mathbb{F}(x)$ of dimension $r$, we define $\E$ to be the subsheaf of $\E'(x) := \E' \otimes \mathcal{O}_X(x)$ whose set of sections over an open set $U \subseteq X$ is given by
\[\E(U) := \big\{ s \in \E'(x)(U) \;|\; s= \pi_x^{-1}t, t \in \E'(U), \text{ and if } x \in U, \text{ then } t(x) \in V \big\}\]
where $\pi_x$ is an uniformizer on  $X$ at $x$. Thus, we obtain 
\[0 \longrightarrow \E' \longrightarrow \E \longrightarrow \cK_{x}^{\oplus r} \longrightarrow 0, \]
a short exact sequence of coherent sheaves.

On the other hand, given a subspace $W \subset \E_{x} \otimes \mathbb{F}(x)$ of dimension $n-r$, we define $\E'$ to be the subsheaf of $\E$ whose set of sections over an open set $U \subseteq X$ is given by
\[\E'(U) := \big\{ s \in \E(U) \;|\; \text{ if } x \in U, \text{ then } s(x) \in W \big\}.\]
Hence we also obtain 
\[0 \longrightarrow \E' \longrightarrow \E \longrightarrow \cK_{x}^{\oplus r} \longrightarrow 0 \]
a short exact sequence of coherent sheaves. The above discussion yields the following. 

\begin{thm} \label{thm-grass}
    Let $\E,\E' \in \Bun_n X$, $x \in |X|$ and $r \in \Z,\; 0 < r \leq n$. The fibers of $h^{\rightarrow}$ over $\E$ can be canonically identified with the set of dimension $n-r$ subspaces inside the $\mathbb{F}(x)$-vector space $\E_x \otimes \mathbb{F}(x)$. In other words, 
    $(h^{\rightarrow})^{-1}(\E)$ is the set of $\mathbb{F}(x)$-rational points of the Grassmannian $\mathrm{Gr}_{n-r}(\E_x).$ The same is true for $h^{\leftarrow}$, in which case we obtain that $(h^{\leftarrow})^{-1}(\E')$ is the space of $\mathbb{F}(x)$-rational points of the Grassmannian $\mathrm{Gr}_{r}(\E_x').$
\end{thm}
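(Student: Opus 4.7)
The bulk of the argument has already been assembled in the discussion preceding the statement; what remains is to package it as a bijection and verify that it descends to isomorphism classes of short exact sequences. My plan is to treat the two claims symmetrically: first build two maps in each direction, then verify they are mutually inverse, and finally note how the construction for $h^{\leftarrow}$ differs only by a twist by $\cO_X(x)$.

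First I fix $\E \in \Bun_n X$ and define a map
\[
\Phi_{\E}: (h^{\rightarrow})^{-1}(\E) \longrightarrow \Gr_{n-r}\bigl(\E_x \otimes \FF(x)\bigr)
\]
by sending a class $[\E' \xrightarrow[r]{x} \E]$ to the image of $\E'_x \otimes \FF(x) \to \E_x \otimes \FF(x)$. Tensoring the defining sequence with $\FF(x)$ and using that $\cK_x^{\oplus r} \otimes \FF(x) \cong \FF(x)^{\oplus r}$ gives the four-term sequence displayed in the text, from which this image has dimension exactly $n-r$. Independence of the chosen representative in the isomorphism class of the short exact sequence is immediate: an isomorphism $\E' \to \E_1'$ compatible with the identity on $\E$ induces an isomorphism on the fibers that commutes with the maps to $\E_x \otimes \FF(x)$, so the image subspace is the same. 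Hence $\Phi_{\E}$ is well defined.

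In the reverse direction I use the explicit construction already written out: given $W \subset \E_x \otimes \FF(x)$ of dimension $n-r$, let $\Psi_{\E}(W) \subset \E$ be the subsheaf whose sections on $U \ni x$ are those $s \in \E(U)$ with $s(x) \in W$. The quotient $\E/\Psi_{\E}(W)$ is supported at $x$ with stalk $(\E_x \otimes \FF(x))/W \cong \FF(x)^{\oplus r}$, so $\Psi_{\E}(W)$ is a Hecke modification of weight $r$; local freeness of $\Psi_{\E}(W)$ can be checked on the stalk at $x$ via the uniformizer $\pi_x$ and a basis adapted to $W$, which recovers the normal form $\mathrm{diag}(I_{n-r},\pi_x I_r)$ from the remark preceding the statement. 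To verify that $\Phi_{\E} \circ \Psi_{\E} = \id$ and $\Psi_{\E} \circ \Phi_{\E} = \id$, I would identify the image of $\Psi_{\E}(W)_x \otimes \FF(x)$ in $\E_x \otimes \FF(x)$ directly from the definition (it is $W$); and conversely, given $[\E' \to \E]$, both $\E'$ and $\Psi_{\E}(\Phi_{\E}([\E' \to \E]))$ are subsheaves of $\E$ that agree away from $x$ and have the same image in the fiber at $x$, so they coincide as subsheaves of $\E$, which yields an isomorphism of the two short exact sequences fitting into the commutative diagram defining equivalence.

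For the statement about $h^{\leftarrow}$ I would repeat the argument after replacing $\E$ by $\E'(x) = \E' \otimes \cO_X(x)$: given a Hecke modification $\E' \hookrightarrow \E$ we have $\E \hookrightarrow \E'(x)$, and passing to the fiber at $x$ now identifies $\E$ with the subsheaf of $\E'(x)$ cut out by an $r$-dimensional subspace of $\E'_x \otimes \FF(x)$, exactly as in the second subsheaf construction in the text. The main obstacle is not conceptual but bookkeeping: one must keep straight which fiber ($\E'_x$ versus $\E_x$) is being used and check that the $r$-dimensional subspace picked out by $\Phi_{\E'}^{\leftarrow}$ really equals $\ker(\E'_x \otimes \FF(x) \to \E_x \otimes \FF(x))$, and that the subsheaf construction based on $\cO_X(x)$ truly produces a locally free sheaf of the right rank, rather than merely a torsion-free subsheaf; both points reduce to a local computation with $\pi_x$ at $x$.
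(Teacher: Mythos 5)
Your proposal is correct and follows essentially the same route as the paper, whose ``proof'' is precisely the discussion preceding the theorem: tensoring the defining sequence with $\mathbb{F}(x)$ to extract the $(n-r)$-dimensional image (resp.\ the $r$-dimensional kernel) of the fiber map, and inverting via the two explicit subsheaf constructions, one inside $\E$ and one inside $\E'(x) = \E' \otimes \cO_X(x)$. The only difference is that you spell out the well-definedness on isomorphism classes and the mutual-inverse check, which the paper leaves implicit.
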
   

Choosing bases for $\E_x \cong \mathcal{O}_{X,x}^{\oplus n},$ and 
$\E_x' \cong \mathcal{O}_{X,x}^{\oplus n}$ we obtain a noncanonical version of the previous theorem. 

\begin{thm} \label{thm-grass}
    Let $\E,\E' \in \Bun_n X$, $x \in |X|$ and $r \in \Z,\; 0 < r \leq n$. The fibers $(h^{\rightarrow})^{-1}(\E)$ can be non-canonically identified with the set of $\mathbb{F}(x)$-rational points of the Grassmannian $\mathrm{Gr}_{n-r}(\mathbb{F}(x)^{\oplus n}).$ Moreover, $(h^{\leftarrow})^{-1}(\E')$ can be non-canonically identified with the space of $\mathbb{F}(x)$-rational points of the Grassmannian $\mathrm{Gr}_{r}(\mathbb{F}(x)^{\oplus n}).$
\end{thm}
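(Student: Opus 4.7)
The plan is to deduce this noncanonical identification directly from the canonical version proved just above. Since $X$ is a smooth curve and $\E$ is locally free of rank $n$, the stalk $\E_x$ is a free $\mathcal{O}_{X,x}$-module of rank $n$; hence choosing a basis provides an isomorphism $\varphi\colon \E_x \xrightarrow{\cong} \mathcal{O}_{X,x}^{\oplus n}$. Reducing modulo the maximal ideal $\mathfrak{m}_x$, i.e.\ tensoring with the residue field $\mathbb{F}(x) = \mathcal{O}_{X,x}/\mathfrak{m}_x$, yields an isomorphism of $\mathbb{F}(x)$-vector spaces
\[ \overline{\varphi}\colon \E_x \otimes \mathbb{F}(x) \xrightarrow{\cong} \mathbb{F}(x)^{\oplus n}. \]

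Any linear isomorphism between finite-dimensional vector spaces induces an isomorphism on the associated Grassmannians of subspaces of any fixed dimension. Applying this to $\overline{\varphi}$ gives a bijection
\[ \mathrm{Gr}_{n-r}\bigl(\E_x \otimes \mathbb{F}(x)\bigr)(\mathbb{F}(x)) \;\xrightarrow{\cong}\; \mathrm{Gr}_{n-r}\bigl(\mathbb{F}(x)^{\oplus n}\bigr)(\mathbb{F}(x)) \]
on $\mathbb{F}(x)$-rational points. Composing this bijection with the canonical identification of $(h^{\rightarrow})^{-1}(\E)$ with $\mathrm{Gr}_{n-r}(\E_x \otimes \mathbb{F}(x))(\mathbb{F}(x))$ supplied by the previous theorem yields the desired identification
\[ (h^{\rightarrow})^{-1}(\E) \;\cong\; \mathrm{Gr}_{n-r}\bigl(\mathbb{F}(x)^{\oplus n}\bigr)(\mathbb{F}(x)). \]
The dependence on $\varphi$ is precisely the reason this identification is noncanonical: a different choice of basis for $\E_x$ changes $\overline{\varphi}$ by an element of $\GL_n(\mathbb{F}(x))$, which acts nontrivially on the Grassmannian.

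The argument for $(h^{\leftarrow})^{-1}(\E')$ is identical: choose a basis $\E'_x \cong \mathcal{O}_{X,x}^{\oplus n}$, reduce modulo $\mathfrak{m}_x$, and compose the resulting isomorphism $\mathrm{Gr}_{r}(\E'_x \otimes \mathbb{F}(x)) \cong \mathrm{Gr}_{r}(\mathbb{F}(x)^{\oplus n})$ with the canonical identification of the previous theorem. There is no real obstacle here; the statement is essentially the canonical theorem expressed after trivialization of the stalk, and the only point worth emphasising in a written proof is that the reduction $\mathcal{O}_{X,x}^{\oplus n} \otimes_{\mathcal{O}_{X,x}} \mathbb{F}(x) \cong \mathbb{F}(x)^{\oplus n}$ turns the intrinsic Grassmannian attached to the stalk into the standard Grassmannian $\mathrm{Gr}_{k}(\mathbb{F}(x)^{\oplus n})$.
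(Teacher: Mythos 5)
Your proposal is correct and follows exactly the paper's route: the paper derives this noncanonical statement from the canonical version immediately preceding it by choosing bases $\E_x \cong \mathcal{O}_{X,x}^{\oplus n}$ and $\E'_x \cong \mathcal{O}_{X,x}^{\oplus n}$, which is precisely your trivialization-and-reduction argument spelled out in more detail. Nothing is missing.
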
   

\begin{df} \label{defHeckemodinfixedvb}
    For a fixed $\E \in \Bun_n X$, we define the space of Hecke modifications $\mathrm{H}_{x}^{r}(X, \E)$ in $\E$ at $x$ of weight $r$ to be 
    the set $(h^{\rightarrow})^{-1}(\E)$ i.e., 
    \[ \mathrm{H}_{x}^{r}(X, \E) := \big\{ [\E' \xrightarrow[r]{x} \E] \;\big|\; \E' \in \Bun_n X \big\}.\]
 \end{df}

\begin{rem}
    The above theorem tell us that $\mathrm{H}_{x}^{r}(X, \E)$  has the structure of an algebraic variety (a complex manifold in the case $\mathbb{F}=\C$) that is non-canonically isomorphic to the Grassmannian $\mathrm{Gr}_{n-r}(\mathbb{F}(x)^{\oplus n}).$
\end{rem}

\section{The origin} 

In this section we assume $\mathbb{F}= \Fq$ to be the finite field with $q$ elements, where $q$ is a prime power. As before, $X$ is a geometrically irreducible smooth projective curve defined over $\mathbb{F}= \Fq$.
Let $\E,\E' \in \Bun_n X$ and $\cK_{x}^{\oplus r}$ be the skyscraper sheaf supported at $x$. We now discuss why such equivalence classes  
\[ 0 \rightarrow \E' \rightarrow \E \rightarrow \cK_{x}^{\oplus r} \rightarrow 0\]
of short exact sequences are called Hecke modifications!

Let $F := \Fq(X)$ be the function field of $X$ and $g(X)$ denote the genus of $X$. As in the previous section, $|X|$ is the set of closed points of $X$ or, equivalently, the set of places in $F$.
For $x \in |X|$, we denote by $F_x$ the completion of $F$ at $x$, by $\mathcal{O}_x$ the ring of integers of $F_x$, by $\pi_x \in \mathcal{O}_x$ (we can assume $\pi_x \in F$) an uniformizer of $x$ and by $q_x$ the cardinal of the residue field $\mathbb{F}(x):=\mathcal{O}_x/(\pi_x) \cong \mathbb{F}_{q_x}.$ Moreover, we denote by $|x|$  the degree of $x$, which is defined by the extension field degree $[\mathbb{F}(x) : \Fq]$. In other words, $q_x = q^{|x|}$. 
Let $| \cdot |_x$ be the absolute value of $F_x$ (resp. $F$) such that $|\pi_x|_x = q_{x}^{-1},$ we call $| \cdot |_x$ the local norm for each $x \in |X|$.

Let $\mathbb{A}$ be the adele ring of $F$ and $\mathbb{A}^{\times} $ the idele group. We denote by $\mathcal{O}_{\mathbb{A}} := \prod \mathcal{O}_x $ where the product is
taken over all places $x$ of $F$. We may consider $F_x$ as embedded into the adele ring $\mathbb{A}$, by sending an element $a \in F_x$ to the adele $(a_y)_{y \in |X|}$ with $a_x = a$ and $a_y = 0$ for $y \neq x$. Let $G(\mathbb{A}):= \mathrm{GL}_n(\mathbb{A})$, $Z$ be the center of  $G(\mathbb{A})$, $G(F):= \mathrm{GL}_n(F)$,  and $K:= \mathrm{GL}_n(\mathcal{O}_{\mathbb{A}})$ the standard maximal compact open subgroup of $G(\mathbb{A})$. For $x \in |X|$, consider $G_x=\GL_n(F_x)$ and $Z_{x}$ the center of $G_x$. Note that $G(\mathbb{A})$ comes together with the adelic topology that makes $G(\mathbb{A})$ into a locally compact group. Hence, $G(\mathbb{A})$ is endowed with a Haar measure. We fix the Haar measure on $G(\mathbb{A})$ for which $\mathrm{vol}(K)=1.$ The topology of $G(\mathbb{A})$ has a neighborhood basis $\mathcal{V}$ of the identity matrix that is given by all subgroups
\[K' = \prod_{x \in |X|} K_{x}' < \prod_{x \in |X|}K_x = K\]
where $K_x := \mathrm{GL}_n(\mathcal{O}_x)$, such that for all $x \in |X|$ the subgroup $K_{x}'$ of $K_x$ is open and consequently of finite index and such that $K_{x}^{'}$ differs from $K_x$ only for a finite number of places.

Let $C^0(G(\mathbb{A}))$ be the space of continuous and $\C$-valuated functions on $G(\mathbb{A})$. 
Roughly speaking, an automorphic form is a function $f \in C^0(G(\mathbb{A}))$ that can be identified as a function on the double coset $G(F) \setminus G(\mathbb{A})/Z K'$ which is of \textit{moderate growth}, for some $K' \in \mathcal{V}$. We refer the reader to \cite{bump} or \cite[Chap. 1]{oliver-thesis} for an exhaustive definition of an automorphic form and its properties.

We shall focus our attention when $K' = K$ in the above, i.e., on the automorphic forms defined over  
$G(F) \setminus G(\mathbb{A})/Z K$. These are the so-called \emph{unramified automorphic forms}. We denote the whole space of unramified automorphic forms by $\cA^K$. 
The other main actor of our story is the Hecke operator, which we introduce in the following.  A function in $C^0(G(\mathbb{A}))$ is called \emph{smooth} if it is locally constant. 

\begin{df} \label{def-hecke} The complex vector space $\mathcal{H}$ of all smooth compactly supported functions $\Phi : G(\mathbb{A}) \rightarrow \C$ together with the convolution product
$$\Phi_1 \ast \Phi_2: g \longmapsto \int_{G(\mathbb{A})} \Phi_1(gh^{-1})\Phi_2(h)dh$$
for $\Phi_1, \Phi_2 \in \mathcal{H}$ is called the \emph{Hecke algebra} for $G(\mathbb{A})$. Its elements are called \emph{Hecke operators}.
\end{df}

The zero element of $\mathcal{H}$ is the zero function, but there is no multiplicative unit. 
For $K' \in \mathcal{V},$ we define $\mathcal{H}_{K'}$ to be the subalgebra of all (left and right) bi-$K'$-invariant elements. These subalgebras have multiplicative units. Namely, the normalized characteristic function $\epsilon_{K'} := (\mathrm{vol}K')^{-1} \mathrm{char}_{K'}$ acts as the identity on $\mathcal{H}_{K'}$ by convolution. 

 When $K'=K$ in the above, we call $\mathcal{H}_K$ the unramified (or spherical) part of $\mathcal{H}$, and its elements are called \emph{unramified (or spherical) Hecke operators}. For $K' \in \mathcal{V},\ K'\neq K$, $\mathcal{H}_{K'}$ is called the ramified part of $\mathcal{H}$, and its elements are called   ramified Hecke operators.
It is well known that every $\Phi \in \mathcal{H}$ is bi-$K'$-invariant for some $K' \in \mathcal{V}$, cf.\ \cite[Prop. 1.4.4]{oliver-thesis}. In particular, 
$\mathcal{H} = \bigcup_{K' \in \mathcal{V}} \mathcal{H}_{K'}.$

 The unramified (or spherical) Hecke algebra $\mathcal{H}_K$ has the following characterization.  For $x \in |X|$, let $\Phi_{x,r}$ be the characteristic function in
\[K\begin{pmatrix}
    \pi_x I_r &  \\ 
 & I_{n-r}
\end{pmatrix} K\] 
and $\Phi_{x,n}$ be the characteristic function in $K \pi_x I_n K = \pi_x I_n K$,
where $I_k$ denotes the $k \times k$ identity matrix. 
Both $\Phi_{x,r}$ and $\Phi_{x,0}$ are elements of $\cH_K$. A theorem of Satake, cf.\ \cite[Thm. 4.6.1 and Prop. 4.6.1]{bump} or \cite{satake63}, states that these Hecke operators generate the unramified Hecke algebra. 

\begin{thm} Identifying the characteristic function on $K$ with $1 \in \C$ yields 
\[ \cH_K \cong \C[\Phi_{x,1}, \ldots, \Phi_{x,n}, \Phi_{x,n}^{-1}]_{x \in |X|}.\] 
In particular, $\cH_K$ is commutative. 
\end{thm}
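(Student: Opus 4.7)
The plan is to reduce the global statement to the classical Satake theorem at each place. Since $K = \prod_{x \in |X|} K_x$ and $G(\mathbb{A})$ is the restricted product of the $G_x$ relative to the $K_x$, every bi-$K$-invariant compactly supported function on $G(\mathbb{A})$ factors as a finite convolution of local bi-$K_x$-invariant functions $\Phi_x \in \cH_{K_x}$ with $\Phi_x = \mathrm{char}_{K_x}$ for almost all $x$. Identifying $\mathrm{char}_{K_x}$ with $1 \in \C$ yields a restricted tensor product decomposition $\cH_K \cong \bigotimes_{x \in |X|}' \cH_{K_x}$, so it suffices to prove, one place at a time, that $\cH_{K_x}$ is commutative and isomorphic to the Laurent polynomial ring $\C[\Phi_{x,1}, \ldots, \Phi_{x,n}, \Phi_{x,n}^{-1}]$.

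For the local analysis I would first invoke the Cartan decomposition, which follows from the elementary divisor theorem over the DVR $\cO_x$:
\[ G_x \;=\; \bigsqcup_{\lambda \in \Z^n_+} K_x\, \pi_x^{\lambda}\, K_x, \]
where $\Z^n_+ = \{\lambda = (\lambda_1 \geq \cdots \geq \lambda_n) \in \Z^n\}$ and $\pi_x^{\lambda} := \mathrm{diag}(\pi_x^{\lambda_1}, \ldots, \pi_x^{\lambda_n})$. The characteristic functions $\mathbf{1}_\lambda$ of these double cosets form a $\C$-basis of $\cH_{K_x}$. Commutativity then follows from the \emph{Gelfand trick}: the anti-involution $\theta: g \mapsto {}^{t}\!g$ preserves $K_x$, fixes every diagonal $\pi_x^\lambda$, and hence fixes every $\mathbf{1}_\lambda$ and therefore every element of $\cH_{K_x}$. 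Since $\theta$ reverses the order of convolution, for any $\Phi_1, \Phi_2 \in \cH_{K_x}$ we obtain $\Phi_1 \ast \Phi_2 = (\Phi_1 \ast \Phi_2)^\theta = \Phi_2^\theta \ast \Phi_1^\theta = \Phi_2 \ast \Phi_1$.

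For the identification of the generators, set $\omega_r = (1^r, 0^{n-r})$, so that $\Phi_{x,r} = \mathbf{1}_{\omega_r}$ and $\Phi_{x,n}^{\pm 1} = \mathbf{1}_{\pm(1,\ldots,1)}$. Each $\lambda \in \Z^n_+$ admits the unique decomposition $\lambda = \sum_{r=1}^{n-1} (\lambda_r - \lambda_{r+1})\,\omega_r + \lambda_n (1,\ldots,1)$, with non-negative coefficients on $\omega_1, \ldots, \omega_{n-1}$. The key claim is an upper-triangular expansion in the Cartan basis: the corresponding convolution product of powers of the $\Phi_{x,r}$ equals $c_\lambda \mathbf{1}_\lambda + \sum_{\mu < \lambda} c_\mu \mathbf{1}_\mu$, with $c_\lambda$ a positive power of $q_x$ and $<$ a suitable dominance refinement. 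Such a triangularity simultaneously gives surjectivity and freeness of the map $\C[\Phi_{x,1}, \ldots, \Phi_{x,n}, \Phi_{x,n}^{-1}] \to \cH_{K_x}$. The main obstacle is precisely this triangularity; the cleanest route is to introduce the Satake transform $\mathcal{S}: \cH_{K_x} \to \C[t_1^{\pm 1}, \ldots, t_n^{\pm 1}]^{S_n}$ defined via a normalized constant term along the upper-triangular Borel, and to verify that it is a $\C$-algebra isomorphism sending $\Phi_{x,r}$ to a power of $q_x$ times the elementary symmetric polynomial $e_r(t_1, \ldots, t_n)$. The fundamental theorem of symmetric polynomials then identifies the target with $\C[e_1, \ldots, e_{n-1}, e_n^{\pm 1}]$, completing the proof.
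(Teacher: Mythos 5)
The paper does not actually prove this theorem: it attributes it to Satake and cites \cite[Thm.\ 4.6.1 and Prop.\ 4.6.1]{bump} and \cite{satake63}. Your sketch is precisely the standard argument carried out in those references --- reduction to the local spherical algebras via the restricted tensor product, the Cartan decomposition over $\cO_x$, the Gelfand (transpose) trick for commutativity, and the Satake transform onto $\C[t_1^{\pm1},\dots,t_n^{\pm1}]^{S_n}$ with $\Phi_{x,r}\mapsto q_x^{r(n-r)/2}e_r$ --- so it matches the paper's intended proof rather than departing from it. The only step you leave unverified is the one you explicitly flag, namely that the Satake transform is an algebra isomorphism with the stated image (equivalently, the upper-triangularity of $\Phi_{x,1}^{a_1}\ast\cdots\ast\Phi_{x,n}^{a_n}$ in the Cartan basis); that is exactly the content of the cited results, so the outline is correct and complete at the level of detail the paper itself operates at.
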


The Hecke algebra $\mathcal{H}$ acts on $C^{0}(G(\mathbb{A}))$ as
\[\Phi(f): g \longmapsto \int_{G(\mathbb{A})} \Phi(h) f(gh) dh.\]
The action of the Hecke algebra on $C^{0}(G(\mathbb{A}))$ descends to an action on the space of automorphic forms. In particular, the unramified Hecke algebra $\mathcal{H}_K$ acts on the space of unramified automorphic forms $\cA^K$. By the Satake theorem, the last action is completely determined by $\Phi_{x,r}(f)$, where $f \in \cA^K$ and $r=1,\ldots,n.$ In what follows, we will see $\cA^K$ as the space of functions on $\Bun_n X$ and thus, the Hecke modifications at $x \in |X|$ with weight $r$, precisely describe the action of $\Phi_{x,r}(f)$.

\begin{rem}
    
One might also consider automorphic forms and Hecke operators in classical context and over any global field, cf. \cite[Sec. 3.1, 3.2]{bump}. 
The Hecke operators and their action on automorphic forms play a key role in the context of both classical and modern number theory. For instance, the Modularity Theorem  states that there is an automorphic form attached to each rational elliptic curve and moreover, that this automorphic form is actually a Hecke eigenform.  Also known as Taniyama–Shimura-Weil conjecture, the Modularity Theorem was proved by Wiles in \cite{wiles} (with a key step given by joint work with Taylor \cite{taylor-wiles-95}) for semi-stable elliptic curves, completing the proof of the Fermat Last Theorem after more than 350 years. The Modularity Theorem was proved completely by Breuil, Conrad, Diamond and Taylor in \cite{bcdt-01}, see also \cite{diamond-shurman-05}.  More generally, the Hecke operators and their action on the space of automorphic forms play a central role in the geometric Langlands correspondence. 
 \end{rem}

We now return to the geometry of $X$ and explain how the Hecke modifications are connected with the action of (unramified) Hecke operators over (unramified) automorphic forms.  The connection between the two worlds is given by the following theorem of Weil. 

The bijection
\[\begin{array}{ccccc}
F^{*}\setminus \mathbb{A}^{*} / \mathcal{O}_{\mathbb{A}}^{*} & = \mathrm{Cl}F & \stackrel{1:1}{\longleftrightarrow} & \mathrm{Pic}X = & \mathrm{Bun}_{1}X \\
{[a]} & & \longmapsto & & \Line_{a}  \end{array} \] 
where $\Line_{a} = \Line_{D}$ if $D$ is the divisor determined by $a$, generalizes to all vector bundles as follows.
A rank $n$ bundle $\E$ can be described by choosing bases for all stalks
\[\E_{\eta} \cong \mathcal{O}_{X, \eta}^{n} = F^{n}, \quad \quad \E_x \cong \mathcal{O}_{X,x}^{n} = (\mathcal{O}_x \cap F)^{n}\]
and inclusion maps $\E_x \hookrightarrow \E_{\eta}$,
where $\eta$ is the generic point of $X$ and $x$ runs over all closed point of $X$.  After tensoring with $F_x$, for each $x \in |X|$, we obtain
\[ F_{x}^{n} \cong \mathcal{O}_{x}^{n} \otimes_{\mathcal{O}_{x}} F_{x} \cong \E_{x} \otimes_{\mathcal{O}_{X,x}} \mathcal{O}_{x} \otimes_{\mathcal{O}_{x}} F_{x} \cong \E_{x} \otimes_{\mathcal{O}_{X,x}} F \otimes_{F} F_{x} \cong F^{n} \otimes_{F} F_{x} \cong F_{x}^{n} \]
which yields an element $g$ of $G(\mathbb{A}).$ 
A change of bases for $\E_{\eta}$ and $\E_x$ corresponds to multiplying $g$ by an element of $G(F)$ from the left and by an element of $K$ from the right, respectively.

Since the inclusion $F \subset F_x$ is dense for every place $x$, and $G(\mathcal{O}_{\mathbb{A}})$  is open in $G(\mathbb{A})$,
every class in $G(F)\setminus  G(\mathbb{A})/G(\mathcal{O}_{\mathbb{A}})$ is represented by a $g= (g_x) \in G(\mathbb{A})$ such that
$g_x \in G(F)$ for all $x \in |X|$. This means that the above construction can be reversed.  
We refer \cite[Parag. 5.1.5]{oliver-thesis} for the precise description of the above construction. Weil's theorem asserts the following.

\begin{thm}\cite[Lemma 3.1]{frenkel-04}   \label{weil-thm} For every $n \geq 1,$ the above construction yields a bijection
\[\begin{array}{ccc}
G(F) \setminus G(\mathbb{A}) / K & \stackrel{1:1}{\longleftrightarrow} & \mathrm{Bun}_n X \\ 
g & \longmapsto & \E_g.
\end{array}\]
Moreover, if $\PBun_n X$ is the set of isomorphism classes of $\P^{n-1}$-bundles over $X$ (cf. \cite[Ex. II.7.10]{hartshorne}),  then the above map induces
\[\begin{array}{ccc}
G(F) \setminus G(\mathbb{A}) / Z K & \stackrel{1:1}{\longleftrightarrow} & \P \mathrm{Bun}_n X \\ 
\overline{g} & \longmapsto & \overline{\E_g}.
\end{array}\]
a bijection as well, where  $\overline{g}$ (resp. $\overline{\E_g}$) stands for its class in $G(F) \setminus G(\mathbb{A}) / Z K$  (resp. $\P \mathrm{Bun}_n X$).
\end{thm}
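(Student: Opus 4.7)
The plan is to verify bijectivity of the construction sketched in the paragraphs preceding the theorem, and then deduce the projective statement from the ordinary one. I would organize the argument into three parts: (i) show that the association $\E \mapsto g$ obtained by choosing bases of $\E_{\eta}$ and of each stalk $\E_x$ gives a well-defined map $\Bun_n X \to G(F)\backslash G(\mathbb{A})/K$; (ii) construct an inverse by gluing adelic lattices into a coherent sheaf $\E_g$ on $X$; and (iii) derive the $\mathbb{P}$-bundle version by analyzing the action of the center.

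For (i), once bases $\E_\eta \cong F^n$ and $\E_x \cong \mathcal{O}_{X,x}^n$ have been fixed, the composed isomorphism $F_x^n \cong \E_x \otimes_{\mathcal{O}_{X,x}} F_x \cong \E_\eta \otimes_F F_x \cong F_x^n$ yields an invertible matrix $g_x \in G(F_x)$. Local freeness of $\E$ forces $g_x \in K_x = \GL_n(\mathcal{O}_x)$ for all but finitely many $x$, giving a genuine adelic element $g = (g_x) \in G(\mathbb{A})$. A change of basis of $\E_\eta$ by $\gamma \in G(F)$ multiplies $g$ on the left by $\gamma$, and a change of basis of $\E_x$ by $k_x \in K_x$ multiplies $g_x$ on the right by $k_x$. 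Since an isomorphism $\E \cong \E'$ merely transports one system of trivializations into another, the class $[g] \in G(F) \backslash G(\mathbb{A})/K$ depends only on the isomorphism class of $\E$.

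For (ii), given $g = (g_x) \in G(\mathbb{A})$, I would define $\E_g$ as the unique coherent subsheaf of the constant sheaf $\underline{F^n}$ on $X$ whose completion at each closed point $x$ is the $\mathcal{O}_x$-lattice $g_x \cdot \mathcal{O}_x^n \subset F_x^n$. The restricted-product condition that $g_x \in K_x$ for almost all $x$ is exactly what is needed to produce a coherent, rank $n$ locally free sheaf: away from the finite set where $g_x \notin K_x$, the sheaf $\E_g$ coincides with the trivial bundle of rank $n$. Reading off the $g$-datum of $\E_g$ with respect to the tautological trivializations recovers the original $g$, so the two constructions are mutually inverse. Injectivity of the forward direction then comes for free: if $g' = \gamma g k$ with $\gamma \in G(F)$, $k \in K$, then $\gamma$ identifies the generic fibers and the components of $k$ identify the local lattices of $\E_g$ and $\E_{g'}$, yielding an explicit isomorphism.

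For (iii), I would use that two rank $n$ bundles have isomorphic projectivizations if and only if they differ by a tensor product with a line bundle, equivalently $\mathbb{P}\Bun_n X = \Bun_n X / \Pic X$. Under the bijection of (i), tensoring by the line bundle $\Line_\ell$ corresponding to an idele $\ell$ multiplies $g$ by the scalar matrix $\ell I_n \in Z$; since $Z \cong \mathbb{A}^{\times}$ and $\Pic X = F^{\times} \backslash \mathbb{A}^{\times}/\mathcal{O}_{\mathbb{A}}^{\times}$ by the classical case recalled just above the theorem, the extra right quotient by $Z$ in $G(F) \backslash G(\mathbb{A})/ZK$ exactly implements the quotient by $\Pic X$. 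The main obstacle is the gluing step in (ii): rigorously producing a coherent sheaf from adelic data and verifying its local freeness, which ultimately rests on the equivalence between coherent sheaves on $X$ and compatible families of local lattices satisfying the restricted-product finiteness condition.
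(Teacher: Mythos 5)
The paper does not actually prove this statement: it sketches the bundle-to-adele construction in the preceding paragraphs and then cites \cite[Lemma 3.1]{frenkel-04} (and \cite[Parag.\ 5.1.5]{oliver-thesis}) for the bijectivity, so there is no internal proof to compare against line by line. Your argument is the standard one found in those references and is essentially sound. Two small differences from the paper's sketch are worth noting. First, the paper reverses the construction by observing that density of $F$ in each $F_x$ together with openness of $K$ lets one represent every double coset by some $g=(g_x)$ with all $g_x\in G(F)$; your lattice-gluing construction of $\E_g$ works directly for an arbitrary adelic $g$ and so bypasses that reduction entirely, at the cost of having to verify that a restricted family of $\mathcal{O}_x$-lattices in $F^n$ glues to a coherent, locally free subsheaf of the constant sheaf --- which you correctly identify as the real content of step (ii). Second, in step (iii) you should make explicit that the action of $Z\cong\mathbb{A}^{\times}$ on the double coset space factors through $F^{\times}\backslash\mathbb{A}^{\times}/\mathcal{O}_{\mathbb{A}}^{\times}=\Pic X$ because scalar ideles in $F^{\times}$ (resp.\ $\mathcal{O}_{\mathbb{A}}^{\times}$) are central and hence absorbed into $G(F)$ on the left (resp.\ $K$ on the right); combined with the fact that $\P(\E)\cong\P(\E')$ if and only if $\E'\cong\E\otimes\Line$ for some line bundle $\Line$ (\cite[Ex.\ II.7.10]{hartshorne}, valid for all $n$, not only rank $2$), this gives both surjectivity and injectivity of the induced map on $G(F)\backslash G(\mathbb{A})/ZK$. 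With those two points spelled out, your proof is complete and is, in substance, the proof the cited sources give.
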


By the above theorem,  we may regard the space $\cA^K$ of unramified automorphic forms as the space of complex valued functions on $\P\Bun_n X$. 
Hence, from now on we consider $f \in \cA^K$ as a function
\[ f : \P\Bun_n X \longrightarrow \C.\] 
As we shall see below, as $\E$ runs over $\Bun_n X$, its Hecke modifications (at $x \in |X|$, of weight $r$) describe the action of $\Phi_{x,r}$ on $\cA^K$. We begin by explaining that it is compatible with the domain of $f \in \cA^K$ being the set $\P\Bun_n X$. We might consider $\P\Bun_n X$ as the orbit set $\Bun_n X/\Pic X$ under the following action 
\[\begin{array}{ccc}
    \Bun_n X \times \Pic X & \longrightarrow & \Bun_n X. \\
     (\E, \Line) & \longmapsto & \E \otimes \Line  
\end{array} \]
For $\E \in \Bun_n X$, we denote by $\overline{\E}$ its class in $\PBun_n X$. 

Let $\Line \in \Pic X$, since the tensor by a line bundle $\Line \in \Pic X$ is an exact functor, we have the bijection
\[\begin{array}{ccc}
   \left\{ \substack{ \text{ isomorphism classes}  \\ 
   0 \longrightarrow \E' \longrightarrow \E \longrightarrow \cK_{x}^{\oplus r} \longrightarrow 0 \\
   \text{  with fixed } \E } \right\}
   
   & \longrightarrow &    
    \left\{ \substack{ \text{ isomorphism classes}  \\ 
   0 \longrightarrow \E'' \longrightarrow \E \otimes \Line \longrightarrow \cK_{x}^{\oplus r} \longrightarrow 0 \\
   \text{  with fixed } \E } \right\} \vspace*{.25cm} \\ 
   
     {\tiny (0 \rightarrow \E' \rightarrow \E \rightarrow \cK_{x}^{\oplus r} \rightarrow 0)} & \longmapsto &
     {\tiny (0 \rightarrow \E'\otimes \Line \rightarrow \E \otimes \Line  \rightarrow \cK_{x}^{\oplus r} \rightarrow 0)} 
\end{array} \]
With the notation of the Definition \ref{defHeckemodinfixedvb}, the above bijection translates to 
\[\begin{array}{ccc}
    \mathrm{H}_{x}^{r}(X, \E) & \longrightarrow & \mathrm{H}_{x}^{r}(X, \E \otimes \Line)  \vspace*{.15cm} \\
    { [\E' \xrightarrow[r]{x} \E]} & \longmapsto & 
    { [\E' \otimes \Line \xrightarrow[r]{x} \E \otimes \Line]. }
\end{array}\]
 Therefore, for the purposes of understanding the action $\Phi_{x,r}$ on $\cA^K$, describe the Hecke modifications of $\E \in \Bun_n X$ is compatible with considering $f \in \cA^K$ as a function on either $\Bun_n X$ or $\PBun_n X.$   

In this context, the action of $\cH_K$ over $\cA^K$ is given as follows. Recall from the previous section that we have the maps
\[ \Bun_n X \xleftarrow[]{h^{\leftarrow}} \mathrm{H}_{x}^{r}(X)  \xrightarrow[]{h^{\rightarrow}}  \Bun_n X\]
Thus consider the operator on  $\cA^K$ given by
\[ f \longmapsto (h^{\leftarrow})_{!}(h^{\rightarrow})^{*}(f)\]
where the superscript $*$ means pull-back of a function with respect to the specified morphism, and the subscript $!$ means summations along the fibers.  In other words, for $\E \in \Bun_n X$
\[ (h^{\leftarrow})_{!}(h^{\rightarrow})^{*}(f)(\overline{\E}) = \sum_{\substack{\E' \in \Bun_n X \\  [\E' \xrightarrow[r]{x} \E] 
 \in \mathrm{H}_{x}^{r}(X, \E)}} f(\overline{\E'}).\] 

Finally, the following proposition explains why we call an equivalence class of short exact sequences (with $\E$ fixed)
\[0 \longrightarrow \E' \longrightarrow \E \longrightarrow \cK_{x}^{\oplus r} \longrightarrow 0 \]
a Hecke modification of $\E$.

\begin{prop} The above operator $(h^{\leftarrow})_{!}(h^{\rightarrow})^{*}$ equals $\Phi_{x,r}$.  
\end{prop}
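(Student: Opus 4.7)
The plan is to unwind the definition of $\Phi_{x,r}(f)$ as a convolution integral and to translate each resulting summand into a Hecke modification of $\E$ via Weil's theorem (Theorem \ref{weil-thm}). Since $\Phi_{x,r}$ is the bi-$K$-invariant characteristic function of $K\alpha_{x,r}K$, with $\alpha_{x,r}=\mathrm{diag}(\pi_x I_r, I_{n-r})$ placed at the place $x$ and equal to the identity at all other places, since $f$ is right $K$-invariant, and since $\mathrm{vol}(K)=1$, any decomposition $K\alpha_{x,r}K=\bigsqcup_i h_i K$ of the support of $\Phi_{x,r}$ into right $K$-cosets yields
\[\Phi_{x,r}(f)(g) = \int_{G(\mathbb{A})}\Phi_{x,r}(h)\, f(gh)\,dh = \sum_i f(gh_i).\]

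Next, I would observe that because $\alpha_{x,r}$ differs from the identity only at $x$, this adelic decomposition reduces to the local Cartan decomposition $K_x\alpha_{x,r}K_x = \bigsqcup_i h_{i,x} K_x$ inside $G_x=\GL_n(F_x)$. Using the standard bijection between right $K_x$-cosets in $G_x$ and $\mathcal{O}_x$-lattices in $F_x^n$ (sending $h_x K_x$ to $h_x\cdot \mathcal{O}_x^n$), the cosets occurring in this decomposition correspond exactly to sublattices $\Lambda'\subset \mathcal{O}_x^n$ of elementary-divisor type $(\pi_x^r, 1^{n-r})$; equivalently, to the $(n-r)$-dimensional $\mathbb{F}(x)$-subspaces $\Lambda'/\pi_x\mathcal{O}_x^n\subset \mathbb{F}(x)^n$. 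This is precisely the Grassmannian parametrisation supplied by Theorem \ref{thm-grass}.

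Finally, I would invoke Theorem \ref{weil-thm}: if $\E = \E_g$ is the bundle associated with $g$, then since $h_i$ is the identity outside $x$, the bundle $\E_{gh_i}$ agrees with $\E$ away from $x$ and has stalk at $x$ equal to the sublattice $\Lambda_i'\subset \mathcal{O}_x^n = \E_x$; globally this produces the short exact sequence
\[0\longrightarrow\E_{gh_i}\longrightarrow\E\longrightarrow\cK_x^{\oplus r}\longrightarrow 0,\]
exhibiting $\E_{gh_i}$ as a Hecke modification of $\E$ at $x$ of weight $r$. Combined with the Grassmannian matching, the assignment $h_i\mapsto[\E_{gh_i}\xrightarrow[r]{x}\E]$ is a bijection between the summands of $\Phi_{x,r}(f)(g)$ and the elements of $\mathrm{H}_x^r(X,\E)$, whence
\[\Phi_{x,r}(f)(\overline{\E}) = \sum_{[\E'\xrightarrow[r]{x}\E]\in \mathrm{H}_x^r(X,\E)} f(\overline{\E'}) = (h^{\leftarrow})_!(h^{\rightarrow})^*(f)(\overline{\E}).\]

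The most delicate step is the local-to-global bookkeeping under the Weil dictionary: one must verify that right-multiplication of an adelic representative $g$ by a local element $h_{i,x}$ realises, on the bundle side, exactly the inclusion $\E_{gh_i}\subset \E$ whose stalk at $x$ is the lattice $\Lambda_i'$, and that the resulting correspondence indexes each Hecke modification in $\mathrm{H}_x^r(X,\E)$ exactly once. This relies on the explicit construction of $\E_g$ from local trivialisations that precedes Theorem \ref{weil-thm}, together with the compatibility between the Cartan decomposition at $x$ and the Grassmannian parametrisation of Theorem \ref{thm-grass}.
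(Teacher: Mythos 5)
Your argument is correct and is exactly the route the paper intends: the paper's proof consists of the single line ``This follows from Weil's Theorem \ref{weil-thm},'' and your coset decomposition of $K\alpha_{x,r}K$, the lattice/Grassmannian matching, and the transport through the Weil dictionary are precisely the details that citation is silently invoking. No discrepancy to report.
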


\begin{proof}
    This follows from Weil's Theorem \ref{weil-thm}.
\end{proof}

\begin{cor} As $\E$ runs over $\Bun_n X$, its Hecke modifications  (at $x \in |X|$ with weight $r$) describe the action of $\Phi_{x,r}$ on $ \cA^K.$ \qed
\end{cor}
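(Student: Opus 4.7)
The plan is to unfold both sides of the claimed operator identity and match them term-by-term using Weil's Theorem \ref{weil-thm}. Fix $f \in \cA^K$ and $g \in G(\A)$ representing $\overline{\E_g} = \overline{\E} \in \PBun_n X$. By Definition \ref{def-hecke}, the Hecke side expands as
\[
\Phi_{x,r}(f)(\overline{g}) \;=\; \int_{G(\A)} \Phi_{x,r}(h)\, f(\overline{gh})\, dh,
\]
and since $\Phi_{x,r}$ is the characteristic function of the double coset
$D_{x,r} := K\,\mathrm{diag}(\pi_x I_r,\, I_{n-r})\, K$, only $h \in D_{x,r}$ contribute. Writing $D_{x,r} = \bigsqcup_i \gamma_i K$ as a finite disjoint union of right $K$-cosets and using that $f$ is right-$K$-invariant together with the normalization $\mathrm{vol}(K) = 1$, I would reduce the integral to the finite sum
\[
\Phi_{x,r}(f)(\overline{g}) \;=\; \sum_i f(\overline{g\gamma_i}).
\]

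Next, I would identify the indexing set $\{\gamma_i\}$ with the $\mathbb{F}(x)$-rational points of the Grassmannian $\mathrm{Gr}_r(\mathbb{F}(x)^{\oplus n})$. Because $\mathrm{diag}(\pi_x I_r, I_{n-r})$ differs from the identity only at the local place $x$, the decomposition $D_{x,r}/K$ reduces to its local analogue $K_x\, \mathrm{diag}(\pi_x I_r, I_{n-r})\, K_x / K_x$, which by a standard elementary-divisors argument classifies $\cO_x$-sublattices $L \subset \cO_x^{\oplus n}$ of colength $r$; such sublattices are in canonical bijection with the $r$-dimensional $\mathbb{F}(x)$-subspaces of $\cO_x^{\oplus n}/\pi_x \cO_x^{\oplus n} = \mathbb{F}(x)^{\oplus n}$, i.e., with points of $\mathrm{Gr}_r(\mathbb{F}(x)^{\oplus n})$. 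By Weil's Theorem \ref{weil-thm}, the adele $g\gamma_i$ corresponds to a vector bundle $\E_{g\gamma_i}$ whose stalk at $x$ is the sublattice $g_x\gamma_i\cO_x^{\oplus n} \subset g_x\cO_x^{\oplus n} \cong \E_x$, while agreeing with $\E$ at every other place. Comparing with the subsheaf construction carried out right before Theorem \ref{thm-grass}, I see that as $i$ varies, $\E_{g\gamma_i}$ ranges without repetition over all Hecke modifications $[\E'\xrightarrow[r]{x}\E] \in \mathrm{H}_{x}^{r}(X,\E)$.

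Combining the two steps yields
\[
\Phi_{x,r}(f)(\overline{g}) \;=\; \sum_{[\E'\xrightarrow[r]{x}\E] \in \mathrm{H}_{x}^{r}(X,\E)} f(\overline{\E'}) \;=\; (h^{\leftarrow})_{!}(h^{\rightarrow})^{*}(f)(\overline{\E}),
\]
which is the desired equality. The main obstacle I anticipate is the careful bookkeeping in the middle step: verifying that right-multiplication of the adelic representative $g$ by a coset representative $\gamma_i$ corresponds, under Weil's dictionary, precisely to the subsheaf $\E'$ produced from the associated $r$-dimensional subspace of $\E_x \otimes \mathbb{F}(x)$, and confirming that distinct cosets of $D_{x,r}/K$ yield non-isomorphic extension classes in $\mathrm{H}_{x}^{r}(X,\E)$ so that no over- or under-counting occurs. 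Once this identification of cosets with Hecke modifications is pinned down, the proposition reduces to simply rewriting the convolution integral as the geometric correspondence sum.
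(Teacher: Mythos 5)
Your argument is correct and follows the same route as the paper, which disposes of the underlying proposition with the single line ``this follows from Weil's Theorem~\ref{weil-thm}''; you have simply supplied the standard details that the paper omits (unfolding the convolution over the double coset, decomposing it into right $K$-cosets, and matching cosets with sublattices of $\E_x$ via Weil's dictionary). One cosmetic slip: a sublattice $\pi_x\cO_x^{\oplus n}\subseteq L\subseteq \cO_x^{\oplus n}$ of colength $r$ corresponds canonically to the $(n-r)$-dimensional subspace $L/\pi_x\cO_x^{\oplus n}$ of $\mathbb{F}(x)^{\oplus n}$ (so to a point of $\mathrm{Gr}_{n-r}$, consistent with the paper's Theorem~\ref{thm-grass}), not to an $r$-dimensional subspace; this does not affect your count or the coset-to-modification bijection that the proof actually uses.
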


We end this section by observing that we could consider in the above setting, any torsion sheaf instead of the skyscraper sheaf $\cK_x^{\oplus r}.$  Let $\Coh_{0}X $ be the category of coherent sheaves on $X$ with $0$-dimensional  support. For $\F \in \Coh_{0}X $, 
we define the Hecke operator 
\[ T_{\F}: \cA^K \rightarrow \cA^K\] 
given by 
\[ { \small (T_{\F}f)(\overline{\E}) := \sum_{\substack{\E' \subset \E \\ \E/\E' \cong \F}} f(\overline{\E'}) } \]
where $\E' $ runs over the coherent subsheaves of $\E$ in $\Bun_n X$. Observe that if $\F = \cK_x^{\oplus r}$ above, then $T_{\F}$ equals either $(h^{\leftarrow})_{!}(h^{\rightarrow})^{*}$ or $\Phi_{x,r}$.


\section{Explicit calculation of Hecke modifications} \label{sec-explicitHecke}

This section aims to emphasise the importance of explicit calculations of Hecke modifications. This means the following: given a vector bundle $\E \in \Bun_n X$, a closed point $x \in |X|$ and $r \in \N_{\geq 1}$, find all possible $\E' \in \Bun_n X$ such that $\E' \subset \E$ and $\E/\E' = \cK_{x}^{\oplus r}.$ We note that if $X$ is defined over a finite field, we might also ask about the number of such classes of isomorphisms. We split the situation where $X$ is defined over the complex numbers $\C$ and over a finite field $\Fq$.

\subsection*{Complex geometry} 
Let $X$ be either the projective line or an elliptic curve defined over the complex numbers.
In \cite{boozer-21}, the author describes explicitly all possible Hecke modifications for rank $2$ vector bundles defined over $X$ and then uses this explicit description to give a modular interpretation of the set of all Hecke modifications. We outline this below. 

\begin{rem}
    When the vector bundles under consideration are of rank $2$, the only Hecke modifications which make sense are of weight $1$. If this is the case, we suppress the mention and the notation for the weight.  
\end{rem} 

We first need the concept of (isomorphism classes of) a sequence of Hecke modifications. 

\begin{df}
    Let  $\E \in \Bun_n X$ (holomorphic), a sequence of Hecke modifications  in $\E$ at points $x_1, \ldots, x_n \in |X|$ with weights $r_1, \ldots r_n \in \N_{\geq 1}$, 
    is a collection $\E_i \in \Bun_n X$ and Hecke modifications $[\E_i \xrightarrow[r_i]{x_i} \E_{i-1}]$ for $i = 1, \ldots, n$, where $\E_0:=\E$. We denote such sequence of Hecke modifications by
    \[ [\E_n \xrightarrow[r_n]{x_n} \E_{n-1} \xrightarrow[]{} \cdots \xrightarrow[]{} \E_{1} \xrightarrow[r_1]{x_1} \E].\] 
\end{df}

\begin{df} Let $\E, \E_1, \ldots, \E_n, \E_1', \ldots, \E_n' \in \Bun_n X$. Two sequences of Hecke modifications in $\E$ 
\[ [\E_n \xrightarrow[r_n]{x_n} \E_{n-1} \xrightarrow[]{}  \cdots \xrightarrow[]{} \E_{1} \xrightarrow[r_1]{x_1} \E] \quad  \text{ and }  \quad [\E'_n \xrightarrow[r_n]{x_n} \E_{n-1} \xrightarrow[]{}  \cdots \xrightarrow[]{} \E'_{1} \xrightarrow[r_1]{x_1} \E] \]
 are isomorphic, if there are isomorphisms $\varphi_i: \E_i \to \E'_i$ such that 
\[ \xymatrix@R5pt@C7pt{ \E_n \ar[rr]^{\alpha_n} \ar[dd]^{\cong} && \E_{n-1} \ar[rr]^{\alpha_{n-1}} \ar[dd]^{\cong} && \E_{n-2} \ar[rr]^{\alpha_{n-2}} \ar[dd]^{\cong} && \cdots \ar[rr]^{\alpha_1}  && \E \ar[dd]^{=} \\
&& && && && \\
\E_n' \ar[rr]^{\alpha_n'}  && \E_{n-1}' \ar[rr]^{\alpha_{n-1}'} && \E_{n-2}' \ar[rr]^{\alpha_{n-2}'} && \cdots \ar[rr]^{\alpha_1'} && \E } \]
where $\alpha_i: \E_i \to \E_{i-1}$ and $\alpha_i': \E_i' \to \E_{i-1}'$ stand for the injective morphisms in $[\E_i \xrightarrow[r_i]{x_i} \E_{i-1}]$ and $[\E_i' \xrightarrow[r_i]{x_i} \E_{i-1}'],$ respectively.
\end{df}

 Next we give a correspondence between a sequence of Hecke modification in $\E \in \Bun_2 X$ and a structure of a quasi-parabolic bundle on $\E$. 

Recall that a rank $2$ quasi-parabolic bundle  $(\E, \ell_{x_1}, \ldots , \ell_{x_n})$ over $X$ consists of a vector bundle $\E \in \Bun_2 X$, a choice of distinct points $(x_1, \ldots, x_n) \in X^n$, and for each $i$ a choice of a line $\ell_{x_i}$ in $\E_{x_i} \otimes \C$, for $i=1, \ldots, n$. 
We denote the set of all rank $2$ quasi-parabolic bundles with marked points $(x_1, \ldots, x_n)\in X^n$  by $\mathcal{P}(X,\E,x_1, \ldots, x_n)$, or $\mathcal{P}(X, \E, n)$ when the dependence on $(x_1, \ldots, x_n)$ is not relevant.
 
\begin{rem} Given $\E \in \Bun_2 X$, there is a canonical correspondence between the set of isomorphisms classes of sequences of Hecke modifications in $\E$ at (distinct) points $(x_1, \ldots, x_n) \in X^n$ and $\mathcal{P}(X,\E;x_1, \ldots, x_n)$ given by
\[ [\E_n \xrightarrow[]{x_n} \cdots \xrightarrow[]{x_2} \E_{1} \xrightarrow[]{x_1} \E] \longmapsto (\E, \ell_{x_1}, \ldots, \ell_{x_n}), \]
where $\ell_{x_i}:= \text{im}((\alpha_1 \circ \cdots \circ \alpha_{i})_{x_i}:(\E_i)_{x_i}\otimes \C \to \E_{x_i} \otimes \C)$. 
\end{rem} 

Let $M^s(X,n) $ be the moduli space of stable rank $2$ quasi-parabolic vector bundles over $X$ with trivial determinant and $n$ marked points.
\begin{df}
    Given distinct points $(y_1, \ldots, y_m, x_1, \ldots, x_n) \in X^{m+n}$ we define the space of marked quasi-parabolic bundles $\mathcal{P}(X,m,n)$ to be the set of isomorphism classes of quasi-parabolic bundles  $(\E, \ell_{y_1}, \ldots ,\ell_{y_m}, \ell_{x_1}, \ldots, \ell_{x_n})$ such that $(\E, \ell_{y_1}, \ldots , \ell_{y_m}) \in M^s(X,m)$. For simplicity, we are suppressing the dependence of $\mathcal{P}(X, m, n)$ on $y_1, \ldots, y_m, x_1, \ldots, x_n$ in the notation.
\end{df}

\begin{thm}
    The set $\mathcal{P}(X, m, n)$ naturally has the structure of a complex manifold isomorphic to a $(\P^1)^n$-bundle over $M^s(X, m)$.
\end{thm}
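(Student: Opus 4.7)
The plan is to exhibit $\mathcal{P}(X,m,n)$ as the total space of the fibre product of $n$ projective line bundles over $M^s(X,m)$. The natural candidate for the bundle map is the forgetful morphism
\[
\pi : \mathcal{P}(X,m,n) \longrightarrow M^s(X,m), \qquad (\E, \ell_{y_1},\ldots,\ell_{y_m},\ell_{x_1},\ldots,\ell_{x_n}) \longmapsto (\E, \ell_{y_1},\ldots,\ell_{y_m}),
\]
that drops the lines sitting above $x_1,\ldots,x_n$.

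First I would identify the set-theoretic fibres of $\pi$. A point in $\pi^{-1}(\E,\ell_{y_1},\ldots,\ell_{y_m})$ is an ordered $n$-tuple of lines $\ell_{x_i}\subset \E_{x_i}\otimes\C$ modulo automorphisms of the underlying stable quasi-parabolic bundle. Since $(\E,\ell_{y_1},\ldots,\ell_{y_m})$ is stable, its automorphism group is just the scalars $\C^{\times}$, and this action is trivial on every line in every $\E_{x_i}\otimes\C$. Hence the fibre is canonically $\prod_{i=1}^{n}\mathbb{P}(\E_{x_i}\otimes\C)\cong(\P^1)^n$, using $\rk\E=2$.

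Next, to upgrade this set-theoretic description to a holomorphic fibre bundle, I would invoke a (possibly only locally defined) universal family $\mathcal{E}\to M^s(X,m)\times X$. For each marked point $x_i$, the restriction $\mathcal{E}_{x_i}:=\mathcal{E}|_{M^s(X,m)\times\{x_i\}}$ is a rank two holomorphic bundle on $M^s(X,m)$, whose projectivisation $\P(\mathcal{E}_{x_i})$ is a holomorphic $\P^1$-bundle. The fibre product
\[
\P(\mathcal{E}_{x_1})\times_{M^s(X,m)}\cdots\times_{M^s(X,m)}\P(\mathcal{E}_{x_n})
\]
is then a $(\P^1)^n$-bundle over $M^s(X,m)$ which, by the previous step, parametrises exactly the same data as $\mathcal{P}(X,m,n)$; the resulting bijection is easily seen to be a biholomorphism.

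The step I expect to be the most delicate is the last one, owing to the potential non-existence of a globally defined universal family on $M^s(X,m)$: there may be a non-trivial Brauer obstruction even over the stable locus. The remedy is to work étale- or analytic-locally on $M^s(X,m)$, where a local universal family always exists, and glue. Because projectivisation is insensitive to line-bundle twists, the transition data (which is line-bundle valued) disappears after passing to $\P(\mathcal{E}_{x_i})$, so the local $\P^1$-bundles patch unambiguously into a global $(\P^1)^n$-bundle. Once this gluing is carried out, the fibre-product description simultaneously endows $\mathcal{P}(X,m,n)$ with its complex manifold structure and verifies the $(\P^1)^n$-bundle property.
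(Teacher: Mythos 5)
Your argument is correct and follows the same route the paper has in mind: the paper states this result without proof (it is quoted from Boozer's work, and the remark following it describes exactly the forgetful fibration over $M^s(X,m)$ with $(\P^1)^n$-fibres given by the choices of lines at $x_1,\ldots,x_n$). Your two key observations --- that stability forces the automorphism group to be the scalars $\C^{\times}$, which act trivially on lines so the fibre is genuinely $\prod_i \P(\E_{x_i}\otimes\C)$, and that the possible failure of a global universal family is harmless because projectivisation kills the line-bundle ambiguity --- are precisely the points that make the standard proof work.
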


The base manifold $M^s(X, m)$ constitutes of the moduli space over which the isomorphism classes of vector bundles (with marked data) ranges and the $(\P^1)^n$ fibers correspond to a space of Hecke modifications isomorphic to $\P^1$ for each of the points $x_1, \ldots, x_n \in X$.

\begin{df}
    Let $(\E,\ell_{x_1}, \ldots, \ell_{x_n})$ be a quasi-parabolic bundle over $X$. We define the Hecke transform $H(\E, \ell_{x_1}, \ldots,\ell_{x_n})$ of $\E$ to be the sub-bundle $\F \subseteq \E$ whose set of sections over an open set $U \subseteq X$, is constructed as follows
    \[\F(U) := \big\{ s \in \E(U) \;|\; \text{ if } x_i \in U, \text{ then } s(x_i) \in \ell_{x_i} \ \text{ for } \ i=1, \ldots, n\big\}.\]
    In particular, $H(\E, \ell_{x_1}, \ldots,\ell_{x_n})$ is isomorphic to $\E_n$, the first vector bundle in the sequence of Hecke modifications corresponding to $(\E, \ell_{x_1}, \ldots,\ell_{x_n})$.
\end{df}

\begin{df}
    We define $\mathcal{P}_M(X,m,n)$
to be the subset of $\mathcal{P}(X, m, n)$ consisting of points $(E, \ell_{y_1}, \ldots, \ell_{y_m}, \ell_{x_1}, \ldots ,\ell_{x_n})$ such that $H(E,\ell_{x_1}, \ldots ,\ell_{x_n})$ is a semi-stable vector bundle.
\end{df}

\begin{rem}
    As semi-stability is an open condition, the set $\mathcal{P}_M(X,m,n)$
is an open sub-manifold of $\mathcal{P}(X, m, n).$
\end{rem}

In this context, the explicit description of the Hecke modifications plays the following role. In \cite[Sec. 4 and 5]{boozer-21}, the author lists the Hecke modifications of all rank $2$ vector bundles over $X$. The description is applied to study the (in)stability of the Hecke modifications and, among other things,  is applied to show the following lemma.

\begin{lemma}
    Let $(\E, \ell_{x_1}, \ldots, \ell_{x_n}) \in \mathcal{P}(X,\E,x_1, \ldots, x_n)$ be such that $\E$ is semi-stable. If $H(\E, \ell_{x_1}, \ldots, \ell_{x_n})$ is semi-stable, then $(\E, \ell_{x_1}, \ldots, \ell_{x_n})$ is semi-stable.
\end{lemma}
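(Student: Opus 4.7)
The plan is to argue by contrapositive: assuming that $(\E,\ell_{x_1},\ldots,\ell_{x_n})$ fails to be parabolically semi-stable, I will produce a violation of either the semi-stability of $\E$ or the semi-stability of $\F := H(\E,\ell_{x_1},\ldots,\ell_{x_n})$. Since $\E$ has rank $2$, the parabolic instability supplies a line sub-bundle $L\subset\E$. Writing $I := \{i \;|\; L_{x_i} = \ell_{x_i}\}$ and denoting by $\alpha\in(0,1)$ the parabolic weight attached to the lines $\ell_{x_i}$, the failure of the parabolic slope inequality reads
\[\deg(L) + \alpha|I| \;>\; \tfrac{1}{2}\bigl(\deg(\E) + \alpha n\bigr),\qquad\text{equivalently}\qquad \deg(L) \;>\; \tfrac{1}{2}\deg(\E) + \alpha\bigl(\tfrac{n}{2}-|I|\bigr).\]

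The first step is to transport $L$ to a candidate destabilizing sub-bundle of $\F$ via $M := L\cap\F$. A local computation at each $x_i$, based on the explicit description of $\F$ as the sub-sheaf of $\E$ whose sections at $x_i$ lie in $\ell_{x_i}$, shows that
\[M \;=\; L\otimes \cO_X\Bigl(-\sum_{i\notin I}x_i\Bigr),\qquad \deg(M) \;=\; \deg(L) + |I| - n.\]
The short exact sequence $0\to \F/M\to \E/L\to \E/(L+\F)\to 0$ realises $\F/M$ as a sub-sheaf of the line bundle $\E/L$; in particular $M$ is a saturated line sub-bundle of $\F$.

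Now I split on the value of $|I|$. If $|I|\leq n/2$, then $\alpha(\tfrac{n}{2}-|I|)\geq 0$, so the parabolic instability immediately gives $\deg(L)>\tfrac{1}{2}\deg(\E)$, contradicting the semi-stability of $\E$. If instead $|I|>n/2$, then $\tfrac{n}{2}-|I|<0$ and the bound $\alpha<1$ yields $\alpha(\tfrac{n}{2}-|I|) > \tfrac{n}{2}-|I|$; substituting this into the parabolic instability gives $\deg(L) > \tfrac{1}{2}\deg(\E) + \tfrac{n}{2}-|I|$, which rearranges (using $\deg(\F)=\deg(\E)-n$) to $\deg(M) > \tfrac{1}{2}\deg(\F)$, contradicting the semi-stability of $\F$.

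The main subtlety I expect is in the case split and in keeping the weight conventions straight: the fact that parabolic instability of $(\E,\ell_\bullet)$ can be rerouted through \emph{either} ordinary instability of $\E$ (when $|I|\leq n/2$) \emph{or} ordinary instability of $\F$ (when $|I|>n/2$) relies crucially on $0<\alpha<1$. Once this dichotomy is set up, the remaining content is the identity $\deg(M)=\deg(L)+|I|-n$ coming from the explicit description of $M = L \cap \F$, together with $\deg(\F)=\deg(\E)-n$.
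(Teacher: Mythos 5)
Your argument is correct. The paper itself gives no proof of this lemma: it simply cites Boozer's Lemmas 4.17 and 5.30, so your write-up is in fact more self-contained than the source. The route you take --- contrapositive via a destabilizing line sub-bundle $L\subset\E$, the computation $M=L\cap\F=L\otimes\cO_X(-\sum_{i\notin I}x_i)$ so that $\deg M=\deg L+|I|-n$, the identity $\deg\F=\deg\E-n$, and the dichotomy $|I|\leq n/2$ versus $|I|>n/2$ exploiting $0<\alpha<1$ --- is essentially the standard argument and the one underlying Boozer's proof (where the weights are all $1/2$). Two small points worth making explicit. First, the lemma as stated in the paper never fixes the parabolic weights; your proof needs them to be equal at all marked points and to lie strictly in $(0,1)$, which is exactly Boozer's convention, but you should say so since the dichotomy genuinely fails at $\alpha=0$ (case 2 breaks) and at $\alpha=1$ (the strict inequality in case 2 degenerates). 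Second, you should begin by replacing a destabilizing rank-one subsheaf with its saturation, so that $L_{x_i}$ really is a line in the fiber $\E_{x_i}\otimes\C$ and the set $I$ is well defined; this is harmless since saturating only increases the parabolic degree. With those conventions pinned down, the case split and the degree bookkeeping are exactly right, and the saturation of $M$ via the inclusion $\F/M\hookrightarrow\E/L$ is a clean (if strictly unnecessary) observation, since instability of $\F$ already follows from any rank-one subsheaf of slope exceeding $\mu(\F)$.
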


\begin{proof}
    See \cite[Lemmas 4.17 and 5.30]{boozer-21}.
\end{proof}

The lemma yields the following theorem, which is \cite[Thms. 4.19 and 5.31]{boozer-21}. 

\begin{thm} \label{thm-boozer}
    There is a canonical open embedding  \[ \mathcal{P}_M (X,m,n) \hookrightarrow M^s(X,m+n). \]
\end{thm}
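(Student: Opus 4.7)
The plan is to construct the embedding explicitly as a simultaneous Hecke transform at the marked points $x_1,\ldots,x_n$, and then verify it is an open inclusion onto its image. Given a representative $(\E,\ell_{y_1},\ldots,\ell_{y_m},\ell_{x_1},\ldots,\ell_{x_n})\in\mathcal{P}_M(X,m,n)$, I set $\F:=H(\E,\ell_{x_1},\ldots,\ell_{x_n})$. By construction of the Hecke transform the inclusion $\F\hookrightarrow\E$ is an isomorphism at every point distinct from the $x_j$, so the parabolic lines $\ell_{y_i}\subset\E_{y_i}\otimes\C$ transfer canonically to lines in $\F_{y_i}\otimes\C$, which I still denote $\ell_{y_i}$. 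At each $x_j$ the induced fiber map $\F_{x_j}\otimes\C\to\E_{x_j}\otimes\C$ has one-dimensional kernel, giving a distinguished line $\ell'_{x_j}\subset\F_{x_j}\otimes\C$. The candidate map is
\[\phi\colon(\E,\ell_{y_\bullet},\ell_{x_\bullet})\longmapsto(\F,\ell_{y_1},\ldots,\ell_{y_m},\ell'_{x_1},\ldots,\ell'_{x_n}).\]

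Next I would verify that $\phi$ lands in $M^s(X,m+n)$. The determinant condition is handled by noting that Hecke modification changes the determinant in a controlled way that can be normalized to the trivial determinant representative in the moduli problem. For stability, the hypothesis that $(\E,\ell_{y_\bullet})\in M^s(X,m)$ together with the semi-stability of $\F=H(\E,\ell_{x_\bullet})$ (which holds by definition of $\mathcal{P}_M$) are the two ingredients supplied by the preceding Lemma. Under the natural duality between quasi-parabolic structures at $x_j$ on $\E$ and those on $\F$ (both are parametrized by $\P^1=\Gr_1(\C^2)$ by Theorem \ref{thm-grass}), the destabilizing sub-line bundles of $(\F,\ell_{y_\bullet},\ell'_{x_\bullet})$ correspond, after reversing Hecke modification at the $x_j$, to destabilizing sub-line bundles of $(\E,\ell_{y_\bullet},\ell_{x_\bullet})$; so semi-stability of the latter (the conclusion of the Lemma) plus the strict stability contributed by the $m$ marked points $y_\bullet$ forces stability on the $\F$-side.

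Injectivity and the construction of an inverse on the image proceed in parallel: starting from $(\F,\ell_{y_\bullet},\ell'_{x_\bullet})\in M^s(X,m+n)$ in the image, one performs the reverse Hecke modification at each $x_j$ along the line $\ell'_{x_j}$, obtaining a rank $2$ bundle $\E$ fitting in $0\to\F\to\E\to\bigoplus_{j=1}^n\mathcal{K}_{x_j}\to 0$, together with the induced line $\ell_{x_j}:=\mathrm{im}(\F_{x_j}\otimes\C\to\E_{x_j}\otimes\C)$ at each $x_j$. This operation is well-defined on isomorphism classes and manifestly inverse to $\phi$. Openness of the image is then automatic: $\phi$ is an algebraic morphism between complex manifolds of the same dimension (both are $(\P^1)^n$-bundles over $M^s(X,m)$ on the source side, and the target $M^s(X,m+n)$ has matching dimension by standard parabolic dimension count), and semi-stability is an open condition, so $\phi$ is an open immersion.

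The main obstacle I anticipate is Step 2, the promotion from semi-stability of $(\E,\ell_{x_\bullet})$ (given by the Lemma) to strict stability of the image $(\F,\ell_{y_\bullet},\ell'_{x_\bullet})$ with all $m+n$ marked lines. This requires a case-by-case check tracking how sub-line bundles $\L\subset\F$ degrees shift under the reverse Hecke modification and how many of the lines $\ell'_{x_j}$ a given $\L$ passes through versus the corresponding lines $\ell_{x_j}$ on the $\E$-side; this bookkeeping is exactly what Boozer carries out in the explicit classification for $X=\P^1$ and $X$ elliptic in \cite[Sec.~4 and 5]{boozer-21}, so I would invoke those computations to close the argument.
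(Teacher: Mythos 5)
Your map is not the one the theorem is about, and as constructed it does not land in the target space. The canonical embedding of Theorem \ref{thm-boozer} (this is how Boozer's Theorems 4.19 and 5.31 work, and why the paper presents the preceding Lemma as "yielding" the theorem) is the \emph{tautological} map
\[
(\E,\ell_{y_1},\ldots,\ell_{y_m},\ell_{x_1},\ldots,\ell_{x_n})\longmapsto(\E,\ell_{y_1},\ldots,\ell_{y_m},\ell_{x_1},\ldots,\ell_{x_n}),
\]
i.e.\ one simply re-reads a marked quasi-parabolic bundle with $m$ "moduli" points and $n$ extra points as a quasi-parabolic bundle with $m+n$ points. No Hecke transform is applied to the underlying bundle. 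The whole content is then that the image is (semi)stable, which is exactly what the Lemma supplies from the two hypotheses ($(\E,\ell_{y_\bullet})\in M^s(X,m)$ and $H(\E,\ell_{x_\bullet})$ semi-stable); injectivity is trivial because the map is the identity on the data, and openness follows because semi-stability of the Hecke transform is an open condition (the Remark before the Lemma). Your map instead replaces $\E$ by $\F=H(\E,\ell_{x_\bullet})$, which is a genuinely different morphism.

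The concrete gap in your version is the determinant step, which you dismiss as a normalization. The target $M^s(X,m+n)$ is defined in the paper as the moduli space of stable rank $2$ quasi-parabolic bundles with \emph{trivial} determinant. Since $\F\subset\E$ with $\E/\F\cong\bigoplus_j\cK_{x_j}$, one has $\det\F\cong\det\E\otimes\cO_X(-x_1-\cdots-x_n)$, of degree $-n$. Tensoring a rank $2$ bundle by a line bundle $\L$ multiplies the determinant by $\L^{\otimes 2}$, so the determinant class can only be adjusted modulo squares: for $n$ odd there is no twist making $\det\F$ trivial, and your map simply does not take values in $M^s(X,m+n)$. In addition, the step you yourself flag as the main obstacle --- transporting stability through the Hecke transform and its inverse --- is precisely the nontrivial content and is left to a citation rather than proved; in the tautological formulation it is not needed at all, because the Lemma already gives semi-stability of $(\E,\ell_{x_\bullet})$ and stability of $(\E,\ell_{y_\bullet})$ does the rest. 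I recommend discarding the Hecke-transform map and proving the statement for the inclusion of data as above.
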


With the above discussion, we would like to highlight the importance of describing explicit Hecke modifications in the context of classical/complex algebraic geometry. Note that in \cite{boozer-21}, Boozer is actually interested in using $ \mathcal{P}_M (X,m,n) $ to construct the symplectic Khovanov homology of $n$-stranded links in lens spaces.


\subsection*{Arithmetic geometry} Here we consider again the case where $X$ is defined over a finite field $\Fq.$ We fix $x \in |X|$, $n \in \Z_{> 0}$ and $r \in \Z$ such that $0 \leq r \leq n$. In this context, as we saw before, knowing explicitly the Hecke modifications is equivalent to knowing the action of Hecke operators on the space of unramified automorphic forms. In the following, we report what is known in this case and what kind of results can be derived from Hecke modifications. 

In \cite{zagier-81}, Zagier observes that if the kernel of certain Hecke operators on automorphic forms (defined over $\Q$) turns out to be an unitarizable representation, a formula of Hecke implies the Riemann hypothesis. Zagier calls the elements of this kernel \textit{toroidal automorphic forms} (see  \cite{oliver-toroidal} for a precise definition).  Moreover, Zagier asks what happens if $\Q$ is replaced by a global function field, e.g., $F$ the function field of $X$. Furthermore, Zagier remarks that the space of unramified toroidal automorphic forms can be expected to be finite dimensional. 

Motivated by Zagier's question, Lorscheid introduced in \cite{oliver-graphs} the so-called \textit{Graphs of Hecke operators} over $\PBun_2 X$. These graphs are subsequently considered in \cite{alvarenga19} over $\PBun_n X$, for every $n \geq 1$. When the Hecke operator under considerations is $\Phi_{x,r}$, these are graphs whose set of vertices is $\PBun_n X$ and whose edges are given as follows: for $\overline{\E}, \overline{\E'} \in \PBun_n X$, there is an edge from $\overline{\E}$ to $\overline{\E'},$ if $\E'$ is a Hecke modification of $\E$ at $x$ with weight $r$, i.e.,  if $[\E' \xrightarrow[r]{x} \E] \in \mathrm{H}_{x}^{r}(X, \E)$. 

Moreover, since $X$ is defined over a finite field, there are finitely many extensions of $\cK_{x}^{\oplus r}$ by $\E$. Thus, we might consider the number of classes of isomorphism of such Hecke modification. Namely, the number of $[\E'' \xrightarrow[r]{x} \E] \in \mathrm{H}_{x}^{r}(X, \E)$ such that $\overline{\E''}\cong \overline{\E'}.$ We denote this number by $m_{x,r}(\overline{\E},\overline{\E'}).$ Then Theorem \ref{thm-grass} yields:

\begin{lemma} Let $\E,\E' \in \Bun_n X$, then 
\[ \sum_{\E' \in \Bun_n X} m_{x,r}(\overline{\E},\overline{\E'})= \sum_{\E' \in \Bun_n X} m_{x,r}(\E,\E') = \# \mathrm{Gr}(n-r,n)(\mathbb{F}(x)) \]
and
\[ \sum_{\E \in \Bun_n X} m_{x,r}(\overline{\E},\overline{\E'}) = 
\sum_{\E' \in \Bun_n X} m_{x,r}(\E,\E') = \# \mathrm{Gr}(r,n)(\mathbb{F}(x)), \]
for any $x \in |X|$ and $0 \leq r \leq n.$
\end{lemma}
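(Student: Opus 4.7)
The plan is to observe that each sum in the lemma simply counts the cardinality of a fiber of $h^{\rightarrow}$ or $h^{\leftarrow}$, partitioned according to the (possibly projective) isomorphism class of one of the two bundles involved; the conclusion then follows by invoking Theorem \ref{thm-grass} directly. More precisely, the set $\mathrm{H}^{r}_{x}(X,\E) = (h^{\rightarrow})^{-1}(\E)$ decomposes as the disjoint union of the subsets $\{[\E'' \xrightarrow[r]{x} \E] : \E'' \cong \E'\}$ as $\E'$ ranges over isomorphism classes in $\Bun_n X$, and each such subset has cardinality $m_{x,r}(\E,\E')$ by definition. Summing yields
$$\sum_{\E' \in \Bun_n X} m_{x,r}(\E,\E') \;=\; \bigl|\mathrm{H}^{r}_{x}(X,\E)\bigr| \;=\; \#\Gr(n-r,n)(\mathbb{F}(x)),$$
where the second equality is Theorem \ref{thm-grass}. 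Repartitioning instead by the projective class, i.e.\ grouping the $[\E''\xrightarrow[r]{x}\E]$ according to $\overline{\E''} \cong \overline{\E'}$, gives the analogous identity for $m_{x,r}(\overline{\E},\overline{\E'})$.

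The second displayed equality of the lemma is proved identically after swapping the roles of $h^{\rightarrow}$ and $h^{\leftarrow}$: fix $\E'$, partition the fiber $(h^{\leftarrow})^{-1}(\E')$ according to the (projective) isomorphism class of $\E$, and read off the total cardinality as $\#\Gr(r,n)(\mathbb{F}(x))$ from Theorem \ref{thm-grass}, which identifies $(h^{\leftarrow})^{-1}(\E')$ with the $\mathbb{F}(x)$-points of $\Gr_r(\E'_x)$.

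There is essentially no technical obstacle here: the lemma is a combinatorial restatement of Theorem \ref{thm-grass} once one unravels the definition of $m_{x,r}$. The only point needing any care is purely notational, namely that the sums in the lemma are indexed by $\Bun_n X$ while $m_{x,r}(\overline{\E},\overline{\E'})$ depends only on the projective class of $\E'$; one should therefore interpret both sides of the projective version under a consistent convention (e.g.\ summing over representatives of $\PBun_n X$, which is legitimate because tensoring by a line bundle is exact and induces a bijection $\mathrm{H}^{r}_{x}(X,\E) \to \mathrm{H}^{r}_{x}(X,\E\otimes\Line)$ as already recorded in the text) before the displayed equality reduces to the tautological counting above.
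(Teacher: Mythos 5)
Your argument is correct and is essentially the paper's own proof: the lemma is stated there as an immediate consequence of Theorem \ref{thm-grass}, exactly via the partition of the fiber $(h^{\rightarrow})^{-1}(\E)$ (resp.\ $(h^{\leftarrow})^{-1}(\E')$) into isomorphism classes counted by $m_{x,r}$. Your additional remark on the line-bundle twist making the projective-class version consistent is a sensible clarification of a point the paper leaves implicit.
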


Graphs of Hecke operators are weighted and oriented graphs whose set of vertices is $\PBun_n X$ and whose edges and weights are given by the Hecke modifications of the vertices. 

\begin{rem}
    We note that one could also define the former graphs considering $\Bun_ X$ as the vertex set. In this case, the graphs obtained from $\PBun_n X$ are projections of the graphs obtained from $\Bun_n X.$  Except for loops, which appear if two adjacent vertices are identified, locally the graphs obtained from $\Bun_ X$ and $\PBun_ X$ are the same. 
\end{rem}

For $n=2$ i.e., over $\Bun_2 X$, Lorscheid describes explicitly the Hecke modifications of every vector bundle when $X$ is the projective line (cf.\ \cite[App. A]{oliver-graphs}) or an elliptic curve (cf.\ \cite[Thm. 3.1]{oliver-elliptic}). These explicit calculations allow him to show, among other things, the following results  which answer some questions posed by Zagier. 

\begin{thm}\cite[Thm. 10.9]{oliver-graphs} The space of unramified toroidal automorphic forms for a global function field is finite dimensional.
\end{thm}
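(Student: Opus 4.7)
The plan is to combine Weil's Theorem~\ref{weil-thm} with the standard spectral decomposition $\cA^K = \cA^K_{\text{cusp}}\oplus \cA^K_{\text{Eis}}$ and then exploit the explicit action of $\cH_K$ on $\PBun_n X$ encoded in the graphs of Hecke operators introduced in the previous subsection.

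First, by Theorem~\ref{weil-thm} I regard $\cA^K$ as a space of functions $f:\PBun_n X\to \C$ of moderate growth. Under this identification, the toroidal condition --- vanishing of the period
\[\int_{T(F)Z(\A)\backslash T(\A)} f(tg)\,dt \;=\; 0\]
for every anisotropic maximal torus $T\subset \GL_n$ and every $g\in G(\A)$ --- becomes a system of linear relations among the values of $f$ on a single $T$-orbit on $\PBun_n X$, and these orbits can be enumerated combinatorially. Next, I invoke the classical finite-dimensionality of $\cA^K_{\text{cusp}}$ (Harder), which reduces the problem to showing finite-dimensionality of $\cA^K_{\text{Eis}}\cap \cA^K_{\text{tor}}$.

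The key step is to use the explicit description of Hecke modifications of rank $n$ bundles to describe the shape of the graphs of Hecke operators on $\PBun_n X$. Outside a bounded region these graphs decompose into finitely many infinite \emph{cuspidal rays} indexed by Harder--Narasimhan strata, and along each ray a single operator $\Phi_{x,r}$ acts by an explicit finite-dimensional matrix obtained from the formulas for the Hecke modifications. Any moderate-growth $\cH_K$-eigenform is therefore determined on each ray by the finite-dimensional space of solutions of the associated linear recursion, a space parameterised naturally by Hecke characters of $\Pic X$.

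For an Eisenstein form attached to a Hecke character $\chi$, one can now evaluate the toroidal period in closed form by pairing the explicit Hecke-eigenvalue formulas with the $T$-orbit sum, obtaining an expression proportional to a product of the shape $L(\chi,\tfrac12)\cdot L(\chi\,\eta_T,\tfrac12)$, where $\eta_T$ is the quadratic idele class character associated with $T$. The toroidal condition then forces the vanishing of these $L$-values; by Weil's Riemann Hypothesis for curves over $\Fq$, for a fixed $T$ only finitely many characters $\chi$ satisfy this, yielding a finite-dimensional space of toroidal Eisenstein forms. The hard part of the argument is precisely this last step, namely matching the toroidal integral against the $\cH_K$-eigenbasis and identifying it with a Rankin--Selberg type $L$-value, so that Weil's Riemann Hypothesis becomes the arithmetic input that turns toroidality into a finite condition, as Zagier originally anticipated.
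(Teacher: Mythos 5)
The paper gives no proof of this statement: it is quoted from Lorscheid \cite{oliver-graphs}, and the surrounding discussion makes clear which mechanism is meant. The explicit Hecke modifications of rank $2$ bundles give the graph of $\Phi_x$ on $\PBun_2 X$ a very specific shape --- a finite nucleus together with finitely many half-line ``cusps'' with edge weights $1$ and $q_x$ --- and the toroidal conditions, unfolded and evaluated at vertices far out along a cusp, become linear relations that determine the values of a toroidal form there from finitely many of them. Finite-dimensionality then follows because the restriction of a toroidal form to a finite set of vertices is injective. This is a soft, combinatorial argument: no spectral decomposition and no $L$-functions enter. Your proposal instead takes the analytic route (decompose $\cA^K$, quote Harder for the cuspidal part, match toroidal Eisenstein data with zeros of $L$-functions), which is closer to how Lorscheid \emph{computes} the toroidal space in \cite{oliver-toroidal} and \cite{oliver-elliptic} than to how he proves bare finiteness in \cite{oliver-graphs}; and as written it has genuine gaps.

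Concretely: (i) the decomposition $\cA^K=\cA^K_{\mathrm{cusp}}\oplus\cA^K_{\mathrm{Eis}}$ omits the residual part and is itself a nontrivial theorem for the space of \emph{all} smooth unramified forms (not just the $L^2$-ones), so you cannot take it as a starting point for free. (ii) Your recursion-along-the-cusps step only controls $\cH_K$-eigenforms; the Eisenstein part also contains derivatives of Eisenstein series (generalized eigenvectors attached to multiple zeros of $L$-functions), and one must first show that the toroidal condition is compatible with a generalized-eigenspace decomposition before arguing character by character. (iii) The appeal to the Riemann Hypothesis points in the wrong direction. Finiteness of the set of pairs $(\chi,s)$ with $L(\chi,s+\tfrac12)\,L(\chi\eta_T,s+\tfrac12)=0$ requires only that these $L$-functions are polynomials in $q^{-s}$ (rationality, a consequence of Riemann--Roch), not that their zeros lie on the critical line; in Zagier's program, recalled earlier in this survey, RH is the hoped-for \emph{consequence} of toroidality (via unitarizability), not an ingredient in proving finiteness. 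None of these is unfixable, but repairing them essentially forces you either back to the graph-theoretic support argument of \cite{oliver-graphs} or into the full spectral analysis of \cite{oliver-toroidal}.
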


\begin{thm}\cite[Thm. 7.7]{oliver-toroidal} There are no nontrivial unramified toroidal automorphic forms for rational function fields. 
\end{thm}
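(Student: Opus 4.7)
The plan is to exploit the extreme simplicity of $\PBun_2 \P^1$ together with the explicit Hecke-modification combinatorics available on the projective line. First, invoking Weil's Theorem~\ref{weil-thm} combined with Grothendieck's classification of vector bundles on $\P^1$, every class in $\PBun_2 \P^1$ is represented by $\cO \oplus \cO(k)$ for a unique $k \in \Z_{\geq 0}$, so an unramified automorphic form $f \in \cA^K$ over $F = \Fq(t)$ is nothing but a complex sequence $(f_k)_{k \geq 0}$ with $f_k := f(\overline{\cO \oplus \cO(k)})$. In this way $\cA^K$ becomes a fully concrete object and the action of each $\Phi_{x,1}$ can be read off Lorscheid's explicit tables for $\P^1$ in \cite[App. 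A]{oliver-graphs} as a weighted three-term recursion in the index $k$ whose coefficients depend only on $|x|$.

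Next, I would translate the toroidal condition into an explicit family of linear constraints on $(f_k)_{k \geq 0}$. By definition, a toroidal form is one whose period integral over $T_E(F) Z(\A) \setminus T_E(\A)$ vanishes for every non-split maximal torus $T_E \subset \GL_2$, indexed by separable quadratic extensions $E/\Fq(t)$. Because $\Pic^0 \P^1 = 0$, the double quotient involved in each period is finite and can be enumerated explicitly, so each torus period becomes a finite $\C$-linear combination of finitely many $f_k$, with coefficients governed by the splitting type of $E$ at each closed point of $\P^1$. One thus obtains an explicit family of linear functionals $\ell_E : \cA^K \to \C$, and the theorem reduces to proving $\bigcap_E \ker \ell_E = 0$.

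Finally, I would organise the verification of this triviality through Eisenstein series. The discrete structure of $\PBun_2 \P^1$ together with Harder's finiteness implies that there are no nontrivial unramified cusp forms over $\Fq(t)$; hence $\cA^K$ is spanned by unramified Eisenstein series $E(\chi, s)$, and since every unramified idele class character of $F$ factors through the degree map (again because $\Pic^0 \P^1 = 0$), this Eisenstein family is essentially a one-parameter family. Hecke's formula then expresses the $T_E$-period of such an Eisenstein series as an abelian $L$-value $L(s, \chi_E)$ attached to $E$, and the task becomes: as $E$ ranges over all separable quadratic extensions of $\Fq(t)$, the corresponding $L$-values have no common zero among the admissible Eisenstein parameters. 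This forces every Eisenstein coefficient of a toroidal $f$ to vanish, hence $f \equiv 0$. The main obstacle is precisely this last arithmetic step, namely producing enough extensions $E$ whose $L$-values are jointly nonvanishing; however, the rationality of $F$ makes the relevant $L$-functions essentially polynomial in $q^{-s}$ with controlled zeros, and the explicit Hecke graph recursion of the previous paragraph propagates the vanishing from finitely many small indices to all $f_k$, closing the argument.
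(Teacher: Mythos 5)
The survey itself contains no proof of this statement: it is quoted directly from \cite{oliver-toroidal}, and the only methodological hint given here is that the result rests on the explicit description of Hecke modifications over $\P^1$ from \cite[App.~A]{oliver-graphs}. Measured against that strategy, your outline assembles the right ingredients --- the identification of $\PBun_2\P^1$ with $\{\overline{\cO\oplus\cO(k)}\}_{k\geq 0}$, the reduction of the toroidal conditions to finite linear relations among the values $f_k$, the absence of unramified cusp forms over $\Fq(t)$, and the Hecke--Zagier expression of torus periods of Eisenstein series as $L$-values --- so the architecture is faithful to the Zagier--Lorscheid argument.

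The difficulty is that the proposal stops exactly where the theorem lives. You reduce everything to the claim that, as $E$ ranges over separable quadratic extensions of $\Fq(t)$, the torus periods of the Eisenstein series have no common zero, and then you declare this ``the main obstacle'' without resolving it; a proof cannot end by naming its own missing step. The gap is real but closable, and in fact a single torus suffices: up to nowhere-vanishing normalizing factors the $T_E$-period of $E(\cdot,s)$ is proportional to $\zeta_E(s)=\zeta_F(s)\,L(s,\chi_E)$, and for $F=\Fq(t)$ one has $\zeta_F(s)=(1-q^{-s})^{-1}(1-q^{1-s})^{-1}$ with no zeros, while the constant quadratic extension $E=\mathbb{F}_{q^2}(t)$ gives $L(s,\chi_E)=(1+q^{-s})^{-1}(1+q^{1-s})^{-1}$, again with no zeros --- so no Eisenstein parameter can be toroidal. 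Beyond this, two inputs are asserted rather than established: (i) the residual (one-dimensional) part of the spectrum is not covered by ``spanned by Eisenstein series'' and must be checked separately not to be toroidal; and (ii) the vanishing of unramified cusp forms over $\Fq(t)$, which in \cite{oliver-toroidal} is extracted precisely from the Hecke graph of $\P^1$ (a cusp form has finite support, and the three-term Hecke recursion evaluated at the largest index in that support forces it to vanish) --- this is the explicit Hecke-modification data your first paragraph sets up but never actually deploys.
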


See \cite[Thm. 5.5]{alvarenga-pereira-23} for the rank $3$ version of the above theorem. 

\begin{thm}\cite[Thm. 7.12]{oliver-elliptic} Let $F$ be the function field of an elliptic curve over a finite field with $q$ elements and class number $h$, let $s+\frac{1}{2}$ be a zero of the zeta function of $F$. If the characteristic is not $2$ or $h\neq q+1$, the space of unramified toroidal automorphic forms is one dimensional and spanned by the Eisenstein series of weight $s$. 
 \end{thm}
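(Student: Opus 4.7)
The plan is to combine three ingredients: Atiyah's classification of rank $2$ vector bundles over the elliptic curve $X$, which gives a concrete description of $\PBun_2 X$; the explicit Hecke modifications provided by \cite[Thm.~3.1]{oliver-elliptic}, which realise each $\Phi_{x,r}$ as a combinatorial operator on $\PBun_2 X$; and a Hecke-type identity, essentially a Rankin--Selberg unfolding, relating the toroidal integral of an Eisenstein series to an $L$-value times the completed zeta function of $F$.

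First I would establish that the Eisenstein series $E(s,\cdot)$ is toroidal. For every anisotropic maximal torus $T \subset \GL_2$ associated to a quadratic extension $L/F$, unfolding the integral
\[\int_{T(F)Z\backslash T(\A)} E(s,g)\,dg\]
yields, up to explicit nonzero constants, a product of the form $L_{F}(s+\tfrac{1}{2},\chi_L)\cdot \zeta_F(s+\tfrac{1}{2})$, where $\chi_L$ is the quadratic character attached to $L/F$. Under the hypothesis $\zeta_F(s+\tfrac{1}{2})=0$, all such integrals vanish simultaneously, while the split-torus integrals vanish by the standard cancellation coming from the constant term of $E(s,\cdot)$. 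Hence $E(s,\cdot)$ lies in the unramified toroidal subspace.

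For uniqueness I would exploit the explicit Hecke graphs. By Theorem~\ref{thm-grass} combined with \cite[Thm.~3.1]{oliver-elliptic}, the action of $\Phi_{x,1}$ on $\cA^K$ is a finite combinatorial operator on $\PBun_2 X$. Any toroidal eigenform must have the same Hecke eigenvalues as $E(s,\cdot)$, namely those prescribed by the local factor of $\zeta_F$ at $s+\tfrac{1}{2}$, so within the Eisenstein spectrum the toroidal subspace is at most one-dimensional. The remaining task is to rule out cuspidal toroidal forms: for an elliptic function field the unramified cuspidal spectrum is supported on a finite subgraph of $\PBun_2 X$ completely described by Atiyah's classification, and the vanishing of every toroidal integral becomes a finite linear system in finitely many unknowns, which one checks to force the trivial solution.

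The main obstacle is precisely this last cuspidal step, and the hypothesis ``characteristic not $2$ or $h \neq q+1$'' is the arithmetic input that guarantees it. The excluded case $\operatorname{char}(F)=2$ with $h=q+1$ corresponds to a degeneration of the Hecke graph in which a nonzero cusp form becomes toroidal; ruling it out in every other case requires a careful examination, via the explicit Hecke modifications of \cite[Thm.~3.1]{oliver-elliptic}, of the toroidal conditions on every semi-stable rank $2$ bundle with nontrivial automorphisms. Once this combinatorial verification is complete, the existence and uniqueness halves combine to give that the unramified toroidal space is one-dimensional and spanned by $E(s,\cdot)$.
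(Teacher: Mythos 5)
First, a point of reference: the survey you are reading does not prove this statement at all --- it is imported verbatim, with a citation, from Lorscheid's \emph{Automorphic forms for elliptic function fields} (Thm.~7.12) --- so the comparison must be with that source. Your overall skeleton does match the real strategy: the existence half is the Hecke--Zagier unfolding $\int_{T_L(F)Z\backslash T_L(\A)}E(s,g)\,dg \doteq \zeta_L(s+\tfrac12)=\zeta_F(s+\tfrac12)\,L_F(s+\tfrac12,\chi_L)$, and the uniqueness half rests on the explicit Hecke modifications on $\PBun_2X$ obtained from Atiyah's classification. But the uniqueness half as you present it has a genuine gap. A toroidal form is not a priori a Hecke eigenform, and nothing forces a toroidal eigenform to ``have the same Hecke eigenvalues as $E(s,\cdot)$'': the toroidal space is merely $\cH_K$-invariant, and the correct argument intersects the toroidal conditions with each piece of the spectral decomposition of $\cA^K$ separately --- the cuspidal part, the Eisenstein part \emph{including derivatives of Eisenstein series at possible multiple zeros of} $\zeta_F$, and the residual part coming from the one-dimensional representations. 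Your proposal omits the residual spectrum entirely and never addresses derivatives, both of which are needed to conclude that the toroidal space equals, rather than merely contains, the line spanned by $E(s,\cdot)$. Moreover the integral over the split torus diverges and must be replaced by a regularized version; it does not simply ``vanish by the standard cancellation.''

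Second, your reading of the hypothesis ``characteristic not $2$ or $h\neq q+1$'' is a guess that points in the wrong direction. Since $h=\#X(\FF_q)$, the excluded case is exactly $a:=q+1-h=0$ in characteristic $2$; there the zeros of $\zeta_F$ satisfy $q^{2s}=-1$, so the eigenvalue $q^{1/2}(q^{s}+q^{-s})$ of $\Phi_{x,1}$ on $E(s,\cdot)$ at a degree-one place degenerates to $0$ and can no longer be separated from eigenvalues occurring elsewhere in the spectrum --- this degeneration, not the appearance of ``a nonzero cusp form that becomes toroidal,'' is what the hypothesis excludes. Finally, the cuspidal step is not merely ``a finite linear system that one checks'': it presupposes the nontrivial fact that every unramified cusp form of an elliptic function field is supported on the nucleus of the graph (a finite set of vertices read off from the Atiyah classification), and this support statement is itself one of the main outputs of the explicit Hecke-modification computations. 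As written, your argument assumes exactly the finiteness it would need to prove.
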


Next, we connect the problem of explicit calculations of Hecke modifications with the calculation of certain products in the 
Hall algebra of $X$.

Recall that $X$ is a smooth projective curve of genus $g(X)$ defined over the finite field $\Fq$, and that $\mathrm{Coh}(X)$ denotes the category of coherent sheaves on $X$. 
The category $\mathrm{Coh}(X)$ is never of finite type, i.e., there always are infinitely many indecomposable objects. However, due to the following theorem of Serre, it is a finitary category (i.e., a small abelian category such that, for any two objects $M,N$, $\# \Hom(M,N)< \infty$ and $\# \Ext^1(M,N)< \infty$).

\begin{thm} \label{serrethem} Let $\mathbb{F}$ be a field and $X$ denote a projective variety over $\mathbb{F}$. The category $\mathrm{Coh}(X)$ is a Hom-finite, abelian and noetherian. Moreover, if $X$ is smooth, then 
\[ \dim_{\mathbf{k}}\mathrm{Ext}^{i}(\F,\G) < \infty \text{ for every }i \geq 0 \quad \text{ and } \quad  \mathrm{gldim}\mathrm{Coh}(X) =\dim X.\] 
for any $\F,\G$ objects in $\mathrm{Coh}(X)$.

\begin{proof} See either \cite[Corollary 3.2.3]{grothendieck-61} or the original paper \cite{serre-55}.  
\end{proof}
\end{thm}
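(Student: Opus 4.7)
The plan is to establish three things in sequence: the basic category-theoretic properties of $\mathrm{Coh}(X)$, the finite-dimensionality of $\mathrm{Ext}^{i}(\F,\G)$, and the identification $\mathrm{gldim}\,\mathrm{Coh}(X)=\dim X$. For the first, abelianness is inherited from $\mathrm{QCoh}(X)$ since coherent sheaves on a noetherian scheme are stable under kernels, cokernels and extensions; noetherianness of the category follows because $X$ is covered by finitely many noetherian affines, so every ascending chain of coherent subsheaves stabilises locally and hence globally. For Hom-finiteness, I would use the identification $\mathrm{Hom}(\F,\G)=H^{0}\bigl(X,\mathcal{H}om(\F,\G)\bigr)$; the internal hom $\mathcal{H}om(\F,\G)$ is itself coherent since $\F$ is locally finitely presented, and then Serre's classical finiteness theorem for the cohomology of coherent sheaves on projective varieties delivers $\dim_{\mathbb{F}}H^{0}\bigl(X,\mathcal{H}om(\F,\G)\bigr)<\infty$.

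For the finite-dimensionality of $\mathrm{Ext}^{i}(\F,\G)$, the key tool is the local-to-global spectral sequence
$$E_{2}^{p,q}\;=\;H^{p}\bigl(X,\mathcal{E}xt^{q}(\F,\G)\bigr)\;\Longrightarrow\;\mathrm{Ext}^{p+q}(\F,\G).$$
Each sheaf $\mathcal{E}xt^{q}(\F,\G)$ is coherent because it may be computed locally from a finite free resolution of $\F$; such resolutions exist since the stalks $\mathcal{O}_{X,x}$ are regular, hence of finite global dimension by Hilbert's Syzygy Theorem. Serre's finiteness theorem then forces every $E_{2}^{p,q}$ to be finite-dimensional, and boundedness of the spectral sequence transfers this to the abutment.

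The identification $\mathrm{gldim}\,\mathrm{Coh}(X)=\dim X$ requires both an upper and a lower bound. The upper bound refines the previous spectral sequence argument: Hilbert syzygy gives $\mathcal{E}xt^{q}(\F,\G)=0$ for $q>\dim X$, and an Auslander--Buchsbaum style estimate at a generic point of the support of $\mathcal{E}xt^{q}$ yields $\mathrm{codim}\,\supp\,\mathcal{E}xt^{q}(\F,\G)\geq q$; Grothendieck vanishing therefore forces $E_{2}^{p,q}=0$ whenever $p+q>\dim X$, so $\mathrm{Ext}^{i}(\F,\G)=0$ for $i>\dim X$. For the lower bound, I would pick a closed point $x\in X$ and compute $\mathrm{Ext}^{\dim X}(\cK_{x},\cK_{x})$ via the Koszul resolution of the residue field over the regular local ring $\mathcal{O}_{X,x}$; this isolates the nonzero top exterior power $\wedge^{\dim X}(\mathfrak{m}_{x}/\mathfrak{m}_{x}^{2})^{\vee}$, saturating the bound. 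The subtlest input is the codimension estimate for $\supp\,\mathcal{E}xt^{q}$, which rests on the sharp commutative-algebra fact that at a generic point $\eta$ of codimension $c$, the local ring $\mathcal{O}_{X,\eta}$ is regular of dimension $c$, whence $\mathrm{pd}_{\mathcal{O}_{X,\eta}}(\F_{\eta})\leq c$; once this is granted the spectral sequence closes the loop and the two bounds assemble into the asserted equality.
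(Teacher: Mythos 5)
The paper does not actually supply an argument here: its ``proof'' is a pointer to \cite[Cor.\ 3.2.3]{grothendieck-61} and to \cite{serre-55}, so there is nothing internal to compare your reasoning against line by line. That said, your sketch is a correct and essentially self-contained reconstruction of what those references contain, and it is the standard modern route: Hom-finiteness via coherence of $\mathcal{H}om(\F,\G)$ plus Serre's finiteness theorem for $H^{0}$; finiteness of $\mathrm{Ext}^{i}(\F,\G)$ via the local-to-global spectral sequence $H^{p}(X,\mathcal{E}xt^{q}(\F,\G))\Rightarrow\mathrm{Ext}^{p+q}(\F,\G)$ together with coherence of the $\mathcal{E}xt$ sheaves; and the two bounds for the global dimension, with the upper bound coming from $\mathcal{E}xt^{q}=0$ for $q>\dim X$, the codimension estimate $\mathrm{codim}\,\supp\mathcal{E}xt^{q}(\F,\G)\geq q$ (valid because a local ring of dimension $c<q$ is regular of global dimension $c$, killing $\mathrm{Ext}^{q}$ at that stalk), and Grothendieck vanishing, and the lower bound from the Koszul computation $\mathrm{Ext}^{\dim X}(\cK_{x},\cK_{x})\neq 0$ at a closed point. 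Two small refinements are worth recording: the finite free resolutions you invoke should be taken over affine opens rather than literally at stalks (this is harmless on a noetherian scheme), and smoothness is genuinely needed only for the global-dimension statement --- coherence of $\mathcal{E}xt^{q}(\F,\G)$, and hence finite-dimensionality of each $\mathrm{Ext}^{i}(\F,\G)$, holds on any projective variety since $\mathrm{Ext}^{q}$ depends only on a truncated finite free resolution. Your argument therefore proves slightly more than the theorem asserts, which is a feature, not a gap.
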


The previous theorem allows one to define the Hall algebra of $X$. 


\begin{df} Fix a square root $v$ of $q^{-1}$. Let $\mathbf{H}_X$ be the $\C$-vector space 
\[ \mathbf{H}_X : = \bigoplus_{\F \in \mathrm{Coh}(X)} \C \cdot \F, \]
where we abuse notation and write $\F \in \mathrm{Coh}(X)$ meaning the object in $\mathrm{Coh}(X)$ is given by the isomorphism class of $\F$. 
Given a triple $(\F,\G,\mathcal{H})$ of coherent sheaves on $X$, define 
\[  \text{ $ h_{\F,\G}^{\mathcal{H}} := \frac{\# \big\{ 0 \longrightarrow \G \longrightarrow \mathcal{H} \longrightarrow \F \longrightarrow 0 \big\}}{\#\mathrm{Aut}(\F) \#\mathrm{Aut}(\G)},$}\]
which is finite since $\mathrm{Coh}(X)$ is a finitary category, Theorem \ref{serrethem}. For $\F,\G \in \mathrm{Coh}(X)$, let 
\[ \langle \F,\G \rangle = (1-g)\rk(\F)\rk(\G) + rk(\F) \deg(\G) - \rk(\G)\deg(\F) \]
be the (additive version) Euler form. With the following product, 
\[\F*\G := v^{- \langle \F,\G \rangle} \sum_{\mathcal{H}} h_{\F,\G}^{\mathcal{H}} \mathcal{H},\]
$\mathbf{H}_X $ has the structure of an associative algebra called the \emph{Hall algebra} of $X$ (or of $\mathrm{Coh}(X)$).
\end{df}

The theory of Hall algebras was first discussed in 1901 (in an elementary version) by Ernest Steinitz in  \cite{steinitz-1901}, where he defines what is nowadays called the \textit{Hall polynomials}. These algebras remained forgotten until brought to light by Karsten Johnsen in 1982, in the survey \cite{karsten}. Half a century after Steinitz, Philip Hall ``rediscovered'' the theory of Hall polynomials and Hall algebras in the survey \cite{hall}. However, Hall did not publish anything more than a summary of his theory. The first complete work on Hall algebras, with definitions and proofs, is due to Ian G. Macdonald, in his book \cite{macdonald}. In the early 1990's,  
 the interest in Hall algebras increased after Ringel formalized the notion of a Hall algebra associated to a finitary category and associated it with quantum groups, see  \cite{ringel2}, \cite{ringel1} and \cite{ringel3}. Motived by some questions from  number theory (e.g., Langlands program), Kapranov inverstigated in \cite{kapranov} the Hall algebra of a curve $\mathbf{H}_{X}:= \mathbf{H}_{\mathrm{Coh}(X)}$, as defined above. After Kapranov, many others authors have worked on the theory of Hall algebra of a curve. In the end of 1990s, Baumann and Kassel  published a paper \cite{kassel} concerning the Hall algebra of the projective line. Recently, Schiffmann has published several works concerning the Hall algebras of curves, cf. \cite{olivier-elliptic1} (joint with Burban), \cite{olivier-elliptic2} and \cite{olivier-curve} (joint with Kapranov and Vasserot).

 \begin{rem}
    The Hall algebra might be defined over any finitary category, see \cite{olivier-12}. Moreover, one can define the Hall algebra of a curve defined over an arbitrary field. This is first introduced by Lusztig in \cite{lusztig-91}, where the author replaces $h_{\F,\G}^{\mathcal{H}}$ by the Euler characteristic of the constructible space of all such objects (i.e., the space of all subobjects of $\cH$ of type $\cG$ and cotype $\F$). This variant of the Hall algebra is known as \emph{$\chi$-Hall algebra}.
    
    In \cite{XXZ}, Xiao, Xu and Zhang have given the definition of the $\chi$-Hall algebra of a triangulated category of arbitrary finite global dimension, e.g., for $\mathrm{Coh}(\mathbb{P}^2)$ or coherent sheaves in singular curves. The $\chi$-Hall algebra appears also in \cite{joyce-07} and \cite{brid-12}. 
\end{rem}

Let us return to the problem of describing the Hecke modifications. The link between that and the Hall algebras is given by the following proposition. 

\begin{prop}
    For a fixed $\E \in \Bun_nX$, the Hecke modifications $[\E' \xrightarrow[x]{r} \E]$  are completely determined by 
\[\mathcal{K}_{x}^{\oplus r} *\E' = v^{- \langle \F,\G \rangle} \sum_{\F} h_{\mathcal{K}_{x}^{\oplus r}\E' }^{\F}\; \F\]
where $\E'$ runs through $\mathrm{Bun}_n X.$
Moreover, $h_{\mathcal{K}_{x}^{\oplus r}\E' }^{\E}$ is the number of classes of isomorphism of Hecke modifications $[\E'' \xrightarrow[r]{x} \E]$ with 
$\E'' \cong \E'$, i.e., $h_{\mathcal{K}_{x}^{\oplus r}\E' }^{\E} = m_{x,r}(\E,\E')$. 
\end{prop}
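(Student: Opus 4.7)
The plan is to unravel both sides of the asserted equality directly from the definitions and identify them using a free group action.

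First, I would expand the Hall product. By the definition of $\ast$,
\[ \mathcal{K}_x^{\oplus r} \ast \E' \;=\; v^{-\langle \mathcal{K}_x^{\oplus r},\, \E' \rangle} \sum_{\mathcal{H}} h_{\mathcal{K}_x^{\oplus r},\, \E'}^{\mathcal{H}}\, \mathcal{H}, \]
where $\mathcal{H}$ ranges over all isomorphism classes of coherent sheaves on $X$. The structure constant $h_{\mathcal{K}_x^{\oplus r},\, \E'}^{\mathcal{H}}$ is nonzero only when $\mathcal{H}$ fits into a short exact sequence $0 \to \E' \to \mathcal{H} \to \mathcal{K}_x^{\oplus r} \to 0$. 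Since $\E'$ is torsion-free and $\mathcal{K}_x^{\oplus r}$ is torsion, any such $\mathcal{H}$ is torsion-free of rank $n$; because $X$ is a smooth curve, this forces $\mathcal{H}\in \Bun_n X$. Thus the sum can be restricted to $\F\in\Bun_n X$, which gives the formula in the statement and shows that the product records every Hecke modification of every $\F \in \Bun_n X$ whose kernel is isomorphic to $\E'$.

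Next, I would identify $h_{\mathcal{K}_x^{\oplus r},\, \E'}^{\E}$ with $m_{x,r}(\E,\E')$. Let $S$ be the set of short exact sequences $0 \to \E' \xrightarrow{\alpha} \E \xrightarrow{\beta} \mathcal{K}_x^{\oplus r} \to 0$ with the outer terms fixed as $\E'$ and $\mathcal{K}_x^{\oplus r}$ and only the maps $\alpha,\beta$ varying. By the defining formula,
\[ h_{\mathcal{K}_x^{\oplus r},\, \E'}^{\E} \;=\; \frac{\# S}{\#\Aut(\E')\cdot \#\Aut(\mathcal{K}_x^{\oplus r})}. \]
The group $\Aut(\E') \times \Aut(\mathcal{K}_x^{\oplus r})$ acts on $S$ by $(\varphi,\psi)\cdot (\alpha,\beta) = (\alpha\circ \varphi^{-1},\, \psi\circ \beta)$, and two elements of $S$ lie in the same orbit precisely when the corresponding short exact sequences are isomorphic in the sense defined at the beginning of the paper (the middle term $\E$ is held fixed). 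Hence the number of orbits is exactly $m_{x,r}(\E,\E')$.

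The decisive point, and the only part that is not a pure unwinding of definitions, is the freeness of this action: if $\alpha\circ \varphi^{-1} = \alpha$, injectivity of $\alpha$ forces $\varphi = \id_{\E'}$, and if $\psi\circ \beta = \beta$, surjectivity of $\beta$ forces $\psi = \id_{\mathcal{K}_x^{\oplus r}}$. Consequently every orbit has size $\#\Aut(\E')\cdot \#\Aut(\mathcal{K}_x^{\oplus r})$, and the orbit-counting identity yields $m_{x,r}(\E,\E') = h_{\mathcal{K}_x^{\oplus r},\, \E'}^{\E}$. I do not expect a serious obstacle: the argument is essentially a translation between the Hall algebra structure constant, which overcounts short exact sequences by the order of the automorphism group of each outer term, and the quotient count appearing in the definition of $m_{x,r}$, with exactness of the sequence providing exactly the freeness needed for this translation to be a bijection on orbits.
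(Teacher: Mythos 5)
Your identification of the structure constant with $m_{x,r}(\E,\E')$ is correct and is essentially the standard argument (the paper itself only cites \cite[Lemmas 2.1 and 2.4]{alvarenga20} for this): the set of short exact sequences $0\to\E'\xrightarrow{\alpha}\E\xrightarrow{\beta}\cK_x^{\oplus r}\to 0$ with all three objects fixed carries a free action of $\Aut(\E')\times\Aut(\cK_x^{\oplus r})$, freeness coming from injectivity of $\alpha$ and surjectivity of $\beta$, and the orbits are exactly the isomorphism classes of sequences in the sense fixed at the start of Section 2. That part of your write-up I would accept as is.

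There is, however, a genuine error in your first step. You claim that any $\cH$ sitting in $0\to\E'\to\cH\to\cK_x^{\oplus r}\to 0$ is torsion-free, so that the sum restricts to $\Bun_n X$. This is false: an extension of a torsion sheaf by a locally free sheaf need not be locally free, the split extension $\cH=\E'\oplus\cK_x^{\oplus r}$ being the obvious counterexample. What is true is only that the torsion subsheaf of $\cH$ injects into $\cK_x^{\oplus r}$ (since its intersection with the torsion-free $\E'$ vanishes), so it can be any nonzero subsheaf of $\cK_x^{\oplus r}$. The product $\cK_x^{\oplus r}*\E'$ genuinely contains non--locally-free terms, and the paper's own computations display them explicitly (e.g.\ the summand $q^5\,\cO(-5)\oplus\cO\oplus\cK_x$ in $\cK_x*\cO(-5)\oplus\cO$). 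The correct reading of the proposition is not that the sum runs over $\Bun_n X$, but that the Hecke modifications are recovered by extracting the coefficients of those $\F$ in the product that happen to be vector bundles; your orbit argument then applies verbatim to each such coefficient, so the proof is easily repaired by deleting the torsion-freeness claim rather than relying on it.
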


\begin{proof}
    See \cite[Lemmas 2.1 and 2.4]{alvarenga20}. 
\end{proof}

In the following, we consider $X$ to be either the projective line or an elliptic curve. We exhibit two examples of how to describe Hecke modifications using Hall algebras and report what is known.  

\begin{ex}  Let $x \in |\P^1|$ of degree $5$ and $\E := \cO \oplus \cO$. We describe the Hecke modifications of $\E$ at $x$ of weight $1$. 

Since $\deg \cO(-5) < \deg \cO $, we first observe that $\cO(-5) \oplus \cO = \cO(-5) * \cO$, where the product is taken in the Hall algebra $\mathbf{H}_{\P^1}$. Thus
\[
    \cK_x * \cO(-5) \oplus \cO  = 
    (\cK_x * \cO(-5)) * \cO. 
\]
From \cite[Thm. 13]{baumann-kassel-01},  
\[ \cK_x * \cO(-5) = \cO +  q^5 \;
    \cO(-5) \oplus \cK_x. \]
Thus 
\[
    \cK_x * \cO(-5) \oplus \cO  = 
   \cO * \cO +  q^5 
    (\cO(-5) \oplus \cK_x) * \cO.
\]
Again from \cite[Thm. 13]{baumann-kassel-01},
\[ 
 \cO * \cO = (q+1)  \cO \oplus \cO, \quad
 \cO(-5) \oplus \cK_x = \cO(-5) * \cK_x, \quad
 \text{ and } \quad 
 \cK_x * \cO = \cO(5) +  q^5 \;
    \cO \oplus \cK_x. 
\]
Therefore,
\[ \cK_x * \cO(-5) \oplus \cO = 
(q+1)  \cO \oplus \cO + 
\cO(-5) \oplus \cO(5) +  q^5 \;
    \cO(-5) \oplus \cO \oplus \cK_x.
\]
Hence we have the following Hecke modifications: $[\cO(-5) \oplus \cO \xrightarrow[]{x} \cO \oplus \cO]$ with multiplicity $q+1$ and  $[\cO(-5) \oplus \cO \xrightarrow[]{x} \cO(-5) \oplus \cO(5)]$ with multiplicity $q^5$. 

As one can observe from Theorem \ref{thm-grass}, the sum of the multiplicities of the Hecke modification of $\cO \oplus \cO$ at $x$ with weight $1$ equals the number of rational points at $\P^{1}(\mathbb{F}_{q^5})$ i.e., $q^5-1$. Thus, $[\cO(-5) \oplus \cO \xrightarrow[]{x} \cO \oplus \cO]$ is not the unique Hecke modification of $\cO \oplus \cO$ at $x$ with weight $1$. 

We use again \cite[Thm. 13]{baumann-kassel-01} to conclude that 
\begin{align*}
\cK_x * \cO(-3) \oplus \cO(-2) & = 
q^5 \;  \cO(2) \oplus \cO(-2) + 
q^3(q^2 -1)\;\cO(-1) \oplus \cO(1)  \\
& + q^3(q^2-1) \; \cO \oplus \cO  + 
q^5 \; \cO(-3) \oplus \cO(3) + 
q^{10} \; \cO(-3) \oplus \cO(-2) \oplus \cK_x.
\end{align*}
This gives one more Hecke modification of 
$\cO \oplus \cO$, namely
\[ [\cO(-3) \oplus \cO(-2) \xrightarrow[]{x} \cO \oplus \cO] \]
with multiplicity $q^5 - q^3$. 
The last Hecke modification of $\cO \oplus \cO$ is given by the product
\begin{align*}
\cK_x * \cO(-4) \oplus \cO(-1) & =  
\cK_x * (\cO(-4) * \cO(-1)) \\
& = ( \cO(1) + q^5 \;  \cO(-4) \oplus \cK_x)*\cO(-1) \\ 
& = \cO(1) * \cO(-1) + q^5 (\cO(-4) *(\cO(4)+q^5 \; \cO(-1)\oplus K_y)) \\
& = q^3 \;  \cO(-1) \oplus \cO(1) + 
(q^3-q)\;  \cO \oplus \cO + q^5 \;  \cO(-4) \oplus \cO(4) \\
& \quad + q^{10} \;  \cO(-4) \oplus \cO(-1) \oplus \cK_x.
\end{align*}
Hence, the last Hecke modification of $\cO \oplus \cO$ at $x$ with weight $1$ is 
\[ [\cO(-4) \oplus \cO(-1) \xrightarrow[]{x} \cO \oplus \cO] \]
with multiplicity $q^3 - q$. 
\end{ex} 

In a forthcoming work \cite{alvarenga-moco-25}, the first and third authors descibe, among other things, explicitly the Hecke modifications for $\E \in \Bun_n \P^1$, for every $n \in \N$. Moreover, the case where $\P^1$ is defined over the complex field is also considered. 

In \cite{alvarenga20} the first author treats the case where $X$ is an elliptic curve. Using the structural results in \cite{olivier-elliptic1} and \cite{dragos-13}, he produces an algorithm to calculate the Hecke modifications of any vector bundles defined over $X$. In the following, we give an example of how the algorithm runs. 

\begin{ex}
    Let $X$ be the elliptic curve defined over $\mathbb{F}_2$, given by $y^2 + y = x^3 + x+1.$  We observe that $X$ has only one rational point, say $x_0 \in X(\mathbb{F}_2)$. 

    We first observe that by Atyiah's classification of vector bundles on elliptic curves (see \cite{atiyah-57}), any indecomposable vector bundle $\E$ on $X$ is defined by its rank $n$, degree $d$, a closed point $x \in |X|$ and an integer $\ell \in \Z$ such that $\ell |x| = \gcd(n,d)$. Therefore, we denote $\E$ by $\E_{(x,\ell)}^{(n,d)}$. We also refer to \cite{burban-07} and \cite{olivier-elliptic1} for Atiyah's classification over any base field. 

    Let $x \in |X|$ of degree $2$ and $\E' = \E_{(x,1)}^{(2,0)}$. Next, we calculate all the vector bundles $\E \in \Bun_2 X$ that are Hecke modified in $\E'$ at $x$ with weight $1$, i.e., the Hecke modifications $[\E' \xrightarrow[]{x} \E]$ for $\E'$ as above. In terms of the Hall algebra of $X$, this means to calculate the vector bundles which appears in the product $\cK_x * \E'$. The strategy is to write both  $\E'$ and $\cK_x $ in terms of some generators of $\mathbf{H}_X$ that are well understood and allow us to do explicit multiplication. 

    For $m \in \N$ and $z \in |X|$ such that $|z|\; | \; m$, we define the elements $T_{(0,m),z} \in \mathbf{H}_X$ by
\[T_{(0,m),z} := \frac{[m]|z|}{m} \sum_{|\lambda|=m/|z|} 
\prod_{i=1}^{l(\lambda)-1}(1-q_{z}^{i/2}) \mathcal{K}_{z}^{(\lambda)},\]
where  $[m] := \tfrac{q^{-s/2} - q^{-s/2}}{q^{-1/2} - q^{1/2}}$; $\lambda$ is a partition, $|\lambda|$ its weight, $l(\lambda)$ its length; and $\mathcal{K}_{z}^{(\lambda)} \in \Coh(X)$ is the unique torsion sheaf supported at $z$ and given by $\lambda$. For example, if $\lambda = 1$, $\mathcal{K}_{z}^{(\lambda)} = \cK_z$ is the skyscraper sheaf at $z$ and if $\lambda = (1, \ldots, 1)$, then $\mathcal{K}_{z}^{(\lambda)} = \cK_{z}^{\oplus r}$, where $r = l(\lambda)$. 

The advantage to working with $T_{(0,m),z}$ is that they can be seen as elements in the Macdonald's ring of symmetric functions, where the computation can be done explicitly, see \cite[subsection 4.2]{olivier-elliptic1}. Via Atiyah's classification, we can extend the definition from $T_{(0,m),z}$ to $T_{(n,d),z}$, for every $n \in \N$ and $d \in \Z$, parametrizing the rank and degree of any coherent sheave on $X$. Thus, 
\begin{equation} \label{eq-example}
   \E' = \tfrac{1}{[2]} T_{(2,0),x} \quad \text{ and } \quad  \cK_x = \tfrac{1}{[2]} T_{(0,2),x}.
\end{equation}
In order to write both $\E'$ and $\cK_x$ in terms of the above cited basis of $\mathbf{H}_X$, we need to consider: 
\begin{itemize}
    \item $X_n := X \otimes_{\mathbb{F}_q} \mathbb{F}_{q^n}$ the base field extension of degree $n$ over $X$; 

    \item $\mathrm{Pic}^{0}(X)$ (resp. $\mathrm{Pic}^{0}(X_n)$) the group of degree zero divisors on $X$ (resp. $X_n$); 

    \item $\widehat{\mathrm{Pic}^{0}(X)}$ (resp. $\widehat{\mathrm{Pic}^{0}(X_n)}$) the character group of $\mathrm{Pic}^{0}(X)$ (resp. $\mathrm{Pic}^{0}(X_n)$); and

    \item $\mathrm{Fr}_{X,n} : X_n \rightarrow X_n$ the Frobenius on $X_n$ relative to $X$ (i.e., induced by the $q$-power on $\cO_X$). 
\end{itemize}
The choice of $x_0 \in X(\Fq)$, allows identifying $\mathrm{Pic}^d X_n$, the divisors of degree $d$ on $X_n$, with $\mathrm{Pic}^0 X_n$. Thus, we can extend a character $\rho$ from $\mathrm{Pic}^0 X_n$ to $\mathrm{Pic}^d X_n$ placing $\rho(x_0)=1.$  
Let $\mathrm{Fr}_{X,n}^{*}$ be the map induced by $\mathrm{Fr}_{X,n}$ on $\widehat{\mathrm{Pic}^{0}(X_n)}$. For  $\widetilde{\rho} \in \widehat{\mathrm{Pic}^{0}(X_n)}/\mathrm{Fr}_{X,n}^{*}$ and a closed point $z \in X$, we define 
\[ \widetilde{\rho}(z) := \frac{1}{n} \sum_{i=0}^{n-1}\rho((\mathrm{Fr}_{X,n}^{*})^{i}(\mathcal{O}_{X_n}(z'))),\]
where $z' \in X_n$ is a closed point that sits above $z$.
For  $\widetilde{\rho} \in \widehat{\mathrm{Pic}^{0}(X_n)}/\mathrm{Fr}_{X,n}^{*}$ and $\mathbf{v} :=(n,d) \in \Z^2$ 
parametrizing the rank and degree of a coherent sheaf on $X$, we define
\[ T_{\mathbf{v}}^{\widetilde{\rho}} := \sum_{x \in |X|} \widetilde{\rho}(x) T_{\mathbf{v},x}.\] 
By definition, $T_{\mathbf{v},x} = 0$, unless $|x|$ divides $\gcd(n,d).$ These $T_{\mathbf{v}}^{\widetilde{\rho}}$ are generators for the  Hall algebra $\mathbf{H}_{X}$, see \cite[Prop. 3.4]{dragos-13}. The following facts highlight the significance of working with these generators : (i) if 
$\widetilde{\rho} \in \widehat{\mathrm{Pic}^{0}(X_n)}/\mathrm{Fr}_{X,n}^{*}$ and $\widetilde{\sigma} \in \widehat{\mathrm{Pic}^{0}(X_m)}/\mathrm{Fr}_{X,m}^{*}$ are different and not given by the norm of the same character, then
\begin{equation} \tag{Step 1} \label{step1}
    T_{\mathbf{v}}^{\widetilde{\rho}} * T_{\mathbf{v'}}^{\widetilde{\sigma}} = T_{\mathbf{v'}}^{\widetilde{\sigma}} * T_{\mathbf{v}}^{\widetilde{\rho}}, 
\end{equation}
where $v' \in \Z^2$ parametrizing the rank and degree of a coherent sheaf; (2) it is always possible to write the product  $T_{\mathbf{v}}^{\widetilde{\rho}} * T_{\mathbf{v'}}^{\widetilde{\rho}}$ as 
\begin{equation} \tag{Step 2} \label{step2}
    T_{\mathbf{v_1}}^{\widetilde{\rho}} \cdots T_{\mathbf{v_k}}^{\widetilde{\rho}} 
\end{equation}
where the slopes of $v_1, \ldots, v_k$ are not decreasing, see \cite[Def. 3.11 and Thm. 5.2]{dragos-13}.  This is fundamental since over elliptic curves, if $\F,\G \in \Coh X$ of slopes $\mu$ and $\nu$ (resp.) such that $\mu < \nu$, then $\Ext^1(\F,G)=0.$ 

We return to our situation, where $n=2$ above. Let $X(\mathbb{F}_2) = \{x_0\}$ and $X(\mathbb{F}_4) = \{x_0, x_1, x_2, y_1, y_2 \}$, where 
$x_1 := (0: \alpha : 1),\;  x_2 := (0: \alpha^2 : 1),\; y_1 := (1: \alpha : 1)$, $y_2 : = (1: \alpha^2 : 1)$ and $\alpha \in \mathbb{F}_4$ is such that $\mathbb{F}_4 = \mathbb{F}_2(\alpha)$. Hence 
\[  \widehat{\mathrm{Pic}^{0}(X)} := \{\rho_0\} \quad \text{ and } \quad 
 \widehat{\mathrm{Pic}^{0}(X)} := \{\rho_0, \rho_1, \rho_2, \rho_3, \rho_4 \}  \]
where $\rho_0$ is the trivial character, $\rho_i(x_1) = \zeta^i$ for $i=0, \ldots, 4$ and $\zeta \in \C$ is a  primitive quintic root of unity. 

We let $x_1, x_2 \in |X_2|$ be the two degree-$1$ points of $X_2$ sitting above $x \in |X|$. Observe that $X$ has just one more closed point $y \in |X|$ of degree $2$, and thus $y_1,y_2 \in |X_2|$ sit above it. 

As previously mentioned, the strategy is to write $\E'$ and $\cK_x$ in terms of the generators $T_{\mathbf{v}}^{\widetilde{\rho}}.$ Observe that 
\[ \sum_{\rho \in \widehat{\mathrm{Pic}^{0}(X_2)}} \widetilde{\rho}(y) T_{(0,2)}^{\widetilde{\rho}} = 
T_{(0,2)}^{\widetilde{\rho_0}} + \tfrac{(\zeta + \zeta^4)}{2} T_{(0,2)}^{\widetilde{\rho_1}}
+ \tfrac{(\zeta + \zeta^4)}{2} T_{(0,2)}^{\widetilde{\rho_4}} 
+ \tfrac{(\zeta^2 + \zeta^3)}{2} T_{(0,2)}^{\widetilde{\rho_2}}
+ \tfrac{(\zeta^2 + \zeta^3)}{2} T_{(0,2)}^{\widetilde{\rho_3}},
\]
while 
\begin{align*}
    T_{(0,2)}^{\widetilde{\rho_0}} & = T_{(0,2),x_0} +  T_{(0,2),x} +  T_{(0,2),y} \\
    T_{(0,2)}^{\widetilde{\rho_1}} & = T_{(0,2),x_0} +  \tfrac{(\zeta + \zeta^4)}{2} T_{(0,2),x} + \tfrac{(\zeta^2 + \zeta^3)}{2} T_{(0,2),y} =  T_{(0,2)}^{\widetilde{\rho_4}} \\
     T_{(0,2)}^{\widetilde{\rho_2}} & = T_{(0,2),x_0} +  \tfrac{(\zeta^2 + \zeta^3)}{2} T_{(0,2),x} + \tfrac{(\zeta + \zeta^4)}{2} T_{(0,2),y} =  T_{(0,2)}^{\widetilde{\rho_3}}. \\
\end{align*}
Thus
\[  \sum_{\rho \in \widehat{\mathrm{Pic}^{0}(X_2)}} \widetilde{\rho}(y) T_{(0,2)}^{\widetilde{\rho}} =  \tfrac{5}{2} T_{(0,2),x} = \tfrac{\#X(\mathbb{F}_4)}{2} T_{(0,2),x}. \]
Identity \eqref{eq-example} yields 
\[  \E' = \tfrac{2}{5 [2]} \sum_{\rho \in \widehat{\mathrm{Pic}^{0}(X_2)}} \widetilde{\rho}(y) T_{(2,0)}^{\widetilde{\rho}} \quad \text{ and  } \quad 
\cK_x = \tfrac{2}{5 [2]} \sum_{\rho \in \widehat{\mathrm{Pic}^{0}(X_2)}} \widetilde{\rho}(y) T_{(0,2)}^{\widetilde{\rho}}. \] 
From \cite[Lemma 2.10]{dragos-13}, the Lie bracket $\big[\cK_x, \E'\big]$ gives us all the vector bundles which appear in the product $\cK_x * \E'$. By \eqref{step1}
\[ \big[\cK_x, \E'\big] = \tfrac{4}{25 [2]^2} 
\left( \rho_0(y)^2 \big[T_{(0,2)}^{\widetilde{\rho_0}}, T_{(2,0)}^{\widetilde{\rho_0}} \big]   + 
4 \rho_1(y)^2 \big[T_{(0,2)}^{\widetilde{\rho_1}}, T_{(2,0)}^{\widetilde{\rho_1}} \big]   +  
4 \rho_2(y)^2 \big[T_{(0,2)}^{\widetilde{\rho_2}}, T_{(2,0)}^{\widetilde{\rho_2}} \big]   \right). \]
From, \cite[Thm. 5.2 ]{dragos-13} i.e., \eqref{step2},
\[ \big[T_{(0,2)}^{\widetilde{\rho_0}}, T_{(2,0)}^{\widetilde{\rho_0}} \big]  = 
\tfrac{c_2^2 (q^{1/2}-q^{-1/2})}{2 c_1} T_{(1,1)}^{\widetilde{\rho_0}} T_{(1,1)}^{\widetilde{\rho_0}} + c_2 (\tfrac{c_2}{c_1}-2) T_{(2,2)}^{\widetilde{\rho_0}},\]
and 
\[ \big[T_{(0,2)}^{\widetilde{\rho_i}}, T_{(2,0)}^{\widetilde{\rho_i}} \big] = 
\tfrac{5[2]}{2q} T_{(2,2)}^{\widetilde{\rho_i}},\]
where $c_i = q^{-1/2} [i] \#X(\mathbb{F}_{q^i})/i$ and $i=1,2.$

Since we have written $\big[\cK_x, \E'\big]$ as sum of products of elements in $\mathbf{H}_X$ ordered by slopes in a non-decreasing order, we might proceed with base change. We observe that
\[ T_{(1,1)}^{\widetilde{\rho_0}} T_{(1,1)}^{\widetilde{\rho_0}}  = T_{(1,1),x_0} T_{(1,1),x_0}, \quad \; T_{(2,2)}^{\widetilde{\rho_0}} =  T_{(2,2),x_0} + T_{(2,2),x} + T_{(2,2),y},\]
and 
\[T_{(2,2)}^{\widetilde{\rho_i}} =  T_{(2,2),x_0} + \rho_i(x) T_{(2,2),x} + \rho_i(y) T_{(2,2),y}\]
for $i=1,2$. Thus, 
\begin{align*}
     \big[\cK_x, \E'\big] & = \tfrac{(q^{-1}- q^{-2})}{2}   T_{(1,1),x_0} T_{(1,1),x_0} + 
     \tfrac{(q^{-2}-q^{-1})}{[2]}  T_{(2,2),x_0} + \tfrac{q^{-3/2}}{[2]} T_{(2,2),x} + 
     \tfrac{q^{-2}}{[2]} T_{(2,2),y}.
\end{align*}
By definition 
\[ T_{(2,2),x_0} = \tfrac{[2]}{2} \left(  \E_{(x_0,2)}^{(2,2)} + (q+1) \E_{(x_0,1)}^{(1,1)} \oplus \E_{(x_0,1)}^{(1,1)} \right)\]
while
\[   T_{(2,2),x}  = [2]  \E_{(x,1)}^{(2,2)}, \quad  \text{ and } \quad 
T_{(2,2),y}  = [2]  \E_{(y,1)}^{(2,2)}.\]
The missing product $T_{(1,1),x_0} T_{(1,1),x_0}$ can be realized in the Macdonald ring of symmetric functions, as notice in \cite[subsection 4.2]{olivier-elliptic1}. This computation can be done using \cite{sagemath}, where we obtain 
\[ T_{(1,1),x_0} T_{(1,1),x_0} = \E_{(x_0,2)}^{(2,2)} + (q+1) \E_{(x_0,1)}^{(1,1)} \oplus \E_{(x_0,1)}^{(1,1)}.  \]
Hence, 
\[
     \big[\cK_x, \E'\big]  = (1 - q^{-1}) \E_{(x_0,1)}^{(1,1)} \oplus \E_{(x_0,1)}^{(1,1)} + (q^{-2}+ q^{-1}) \E_{(x,1)}^{(2,2)} + q^{-2} \;\E_{(y,1)}^{(2,2)}.
\]
Therefore, 
\[ [\E' \xrightarrow[]{x} \E_{(x_0,1)}^{(1,1)} \oplus \E_{(x_0,1)}^{(1,1)}], \quad 
 [\E' \xrightarrow[]{x} \E_{(x,1)}^{(2,2)}], \quad \text{and} \quad 
 [\E' \xrightarrow[]{x} \E_{(y,1)}^{(2,2)}] \]
 are all Hecke modifications $[\E'' \xrightarrow[]{x} \E]$ with $\E'' \cong \E'.$ 
The multiplicities are obtained by multiplying above identity by the Euler form of $\cK_x$ and $\E'$, i.e., multiplying by $q^{2}$. 
\end{ex}

\section{Hecke modifications for rank $2$ vector bundles: elementary transformations} \label{sec-elementarytransf}

As seen in the last section, there is a correspondence between Hecke modifications and quasi-parabolic bundles. In this section, we descibe how for rank $2$ vector bundles, this can be viewed as birational morphisms of ruled surfaces.

 Let $(\E, \ell_{p_1}, \cdots , \ell_{p_n})$ be a
 rank $2$ quasi-parabolic bundle on a curve $X$.  Let $T$ be a subset of  $\{1, \dots , n\}$  of cardinality $t \geq 0$. We can define a Hecke modification $\E'$ of the vector bundle $\E$ via the following sequence: 
\[ 0 \to \E' \xrightarrow{\alpha} \E \to \bigoplus_{i \in T} (\E_{x_i}/\ell_{x_i})\otimes\mathcal{O}_{x_i} \to 0. \]

 If $i \notin  T$, then $\alpha_{x_i}: \E_{x_i} \to \E^{\prime}_{x_i}$ is an isomorphism and $(\ell^{\prime}_{x_i})^{-1} = (\alpha_{x_i})^{-1}\ell_{x_i}$ is the linear subspace at the point $x_{i}$. On the other hand if $i \in T$, then $\ell_{p_i}^{\prime} = \mathrm{ker}(\alpha_{i})$ gives the quasi-parabolic structure at the point $x_i$. 
 
 We now see how obtaining a new quasi-parabolic vector bundle $(\E^{\prime}, \ell^{\prime}_{x_1}, \cdots , \ell^{\prime}_{x_n})$ from the quasi-parabolic bundle $(\E, \ell_{x_1}, \cdots , \ell_{x_n})$ can be seen as a birational morphism of ruled surfaces. Let us recall the following definition.

  \begin{df}
      A \emph{ruled surface} is a surface $S$ together with a surjective morphism $\pi: S\to X$ to a non-singular curve $X$ such that the fiber $S_{x}$ is isomorphic to $\mathbb{P}^1$ for every point $x \in X$ and such that $\pi$ admits a section (i.e., a morphism $\sigma: X \to S$ such that $\pi \bullet \sigma = \mathrm{id}_{X}$).
  \end{df} 

  We recall the following key result. We refer the reader to \cite{hartshorne} and \cite{friedman2012algebraic} for proofs and related results.
  
 \begin{prop}\label{projsurface}
     If $\pi: S \to X $ is a ruled surface, then there exists a locally free sheaf $\E$ of rank $2$ on $X$ such that $S\simeq \mathbb{P}(\E)$ over $X$, where $\mathbb{P}(\E)$ is the projectivization of $\E$ (see \cite[Chapter $II.7$]{hartshorne} for definition of $\mathbb{P}(\E)$). Conversely, every such $\mathbb{P}(\E)$ is a ruled surface over $X$. If $\E$ and $\E^{\prime}$ are two locally free sheaves of rank $2$ on $X$, then $\mathbb{P}(\E)$ and $\mathbb{P}(\E^{\prime})$ are isomorphic as ruled surfaces over $X$ if and only if there is an invertible sheaf $\mathcal{L}$ on $X$ such that $\E^{\prime} \simeq \E \otimes \mathcal{L}$.
  \end{prop}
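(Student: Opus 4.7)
The plan is to establish the three assertions separately, following the classical strategy of \cite[Prop.~V.2.2]{hartshorne}.

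For the forward implication, I would start from the section $\sigma: X \to S$ and the image divisor $D := \sigma(X) \subset S$. Since $D$ meets each fiber $S_x \simeq \P^1$ transversally in a single point, the line bundle $\Line := \cO_S(D)$ restricts to $\cO_{\P^1}(1)$ on every fiber. As $h^0(\P^1, \cO(1)) = 2$ is constant in $x$, cohomology and base change (Grauert's theorem) then imply that $\E := \pi_* \Line$ is locally free of rank $2$ on $X$ and that the adjunction $\pi^* \E \to \Line$ is surjective. The universal property of $\P(\E)$ yields a morphism $\varphi: S \to \P(\E)$ over $X$, which a fiberwise check shows to be an isomorphism of $\P^1$-bundles.

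For the converse, if $\E \in \Bun_2 X$, then $\P(\E)$ is a smooth projective surface and $\pi: \P(\E) \to X$ has all fibers isomorphic to $\P^1$. To produce a section, I would invoke the fact that any rank $2$ vector bundle on a smooth curve admits a quotient line bundle $\E \twoheadrightarrow \mathcal{M}$ (obtained, e.g., via Riemann--Roch after twisting by a sufficiently positive line bundle), and such a quotient canonically defines a section of $\pi$. The identification $\P(\E \otimes \Line) \simeq \P(\E)$ over $X$ for any $\Line \in \Pic X$ is immediate from $\Sym^n(\E \otimes \Line) = \Sym^n(\E) \otimes \Line^{\otimes n}$. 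Conversely, any isomorphism $\varphi: \P(\E) \to \P(\E')$ over $X$ pulls the tautological bundle back to $\mathcal{N} := \varphi^* \cO_{\P(\E')}(1)$, which restricts to $\cO_{\P^1}(1)$ on each fiber of $\pi$. The exact sequence
$$0 \longrightarrow \Pic X \xrightarrow{\pi^*} \Pic \P(\E) \longrightarrow \Z \longrightarrow 0,$$
whose right arrow is restriction to a fiber, then forces $\mathcal{N} \simeq \cO_{\P(\E)}(1) \otimes \pi^* \Line$ for some $\Line \in \Pic X$. Applying $\pi_*$ with the projection formula and using $\pi_* \cO_{\P(\E)}(1) = \E$ produces the desired isomorphism $\E' \simeq \E \otimes \Line$.

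The main obstacle is promoting $\varphi: S \to \P(\E)$ in the first step to a global isomorphism of schemes: one must argue that a proper morphism between smooth projective surfaces that is an isomorphism on each $\pi$-fiber is itself an isomorphism, which ultimately rests on Zariski's main theorem (the morphism being birational and quasi-finite with connected fibers). A secondary technical point is the splitting of $\Pic \P(\E)$ used in the uniqueness step, which follows from the existence of a section of $\pi$ trivializing the fiberwise degree map.
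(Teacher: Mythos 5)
Your argument is correct and coincides with the classical proof that the paper itself defers to (Hartshorne, Prop.~V.2.2 together with II.7.9--7.11, and Friedman): pushing forward $\cO_S(\sigma(X))$ via cohomology and base change for the first claim, a quotient line bundle for the converse, and the decomposition $\Pic\P(\E)\simeq \pi^*\Pic X\oplus\Z$ plus the projection formula for the uniqueness statement. The paper gives no independent proof, so there is nothing further to compare.
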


 For simplicity of notation, let us assume that $(\E, \ell_{x})$ is a rank $2$ quasi-parabolic vector bundle with quasi-parabolic structure at only one point $x$. 
 By Proposition \ref{projsurface} considering the vector bundle $\E$ as a locally free sheaf, we can view its projectivization $\mathbb{P}(\E)$  as a ruled surface over the curve $X$, say $\pi: \mathbb{P}(\E) \to X$. We denote by $L$ the projectivization of the $2$-dimensional fiber of $\E$ and by $P$ the projectivization of the quasi-parabolic structure $\ell_{x}$. Observe that $L$ is a line  and contains $P$ as a point.

 Let 
 \[f: \widetilde{\mathbb{P}(\E)} \to \mathbb{P}(\E)\] 
 be the blow-up of $\mathbb{P}(\E)$ at the point $P$. 
 We observe that the strict transform $\tilde{L}$ of $L$ on $\widetilde{\mathbb{P}(\E)}$ is isomorphic to $\mathbb{P}^1$. Since $P$ is a nonsingular point of $L$, it has multiplicity $1$. By \cite[Chapter V, Proposition $3.6$]{hartshorne}, $\tilde{L} \sim f^{*}L - E$, where $E$ is the exceptional divisor. Furthermore by \cite[Chapter V, Proposition $2.3$]{hartshorne}, $L^2 = 0$. Therefore $\tilde{L}^2 = -1$.
 Then by Castelnuovo's contractability theorem, there is a morphism, say $g$ from $\widetilde{\mathbb{P}(\E)}$ to a surface $S$ contracting the $-1$ curve $\tilde{L}$ to a point $Q$. In other words, $g:\widetilde{\mathbb{P}(\E)} \to S$ is the blow-up of $S$ with center $Q$. It is easy to check that $g(E) \simeq \mathbb{P}^1$ and $g(E)^2 = 0$. Note that outside of $L$ and $M$ (respectively), $\mathbb{P}(\E) \simeq S$. Therefore  $\pi^{\prime}: S \to X$ is in fact a morphism. Moreover, the fibers of $\pi^{\prime}$ are all isomorphic to $\mathbb{P}^1$ and the strict transform of the section $D$ is a section $D^{\prime}$ on $S$. Therefore, $\pi^{\prime}:S \to X$ is another ruled surface. 
 
 By the converse of Proposition \ref{projsurface}, we know that any ruled surface is the projectivization of a locally free sheaf. Therefore, "deprojectivizing" $S$ and $Q$, we get  $\E^{\prime}$ to be the locally free sheaf such that $S = \mathbb{P}(\E^{\prime})$ and  $\ell^{\prime}_x$ to be the quasi-parabolic structure such that $Q = \mathbb{P}(\ell^{\prime}_{x})$. 

\begin{df}
 We say the ruled surface $\pi^{\prime}: \mathbb{P}(\E^{\prime}) \to X$ is obtained via an \emph{elementary transformation of $\mathbb{P}(\E)$ with center $P$}. As we can see from the above discussion, this corresponds to the Hecke modification associated to the quasi-parabolic vector bundle $(\E, \ell_x)$ at the parabolic point $x \in X$.
\end{df}

\begin{rem} We make the following remarks:
\begin{enumerate}
     \item The above construction can be done for any number of points and the genus of the curve $X$ does not play a role, only that the curve is nonsingular. 
   \item When $X = \mathbb{P}^1$ is a projective line, using an elementary transformation one can inductively construct rational ruled surfaces $\mathbb{S}_n$, for $n\geq 0$ with $\mathbb{S}_0$ being the smooth quadric $\mathbb{P}^1 \times \mathbb{P}^1$ in $\mathbb{P}^3$. These surfaces are also called \emph{Hirzebruch surfaces}. See Chapter $5$ of \cite{friedman2012algebraic} for their construction.
   \item It is easy to see from the discussion above that elementary transformations are involutions. Suppose $\mathrm{el}_T$ (respectively $\mathrm{el}_R$) denotes the elementary transformation centered at the parabolic points $\{x_i\}_{i \in T}$ (respectively at the parabolic points $\{x_i\}_{i \in R}$) then $\mathrm{el}_{T} \bullet \mathrm{el}_R = \mathrm{el}_{T \cup R \backslash T\cap R}$. The set of elementary transformations form a group.
   \item Elementary transformations have several applications. Some recent examples include \cite{loray-heu-19} where the authors used elementary transformations to study moduli spaces of rank $2$ (quasi)-parabolic vector bundles with logarithmic connections, and \cite{Araujo2021automorphisms} where the authors show that any automorphism of the moduli space of rank $2$, degree $0$ parabolic bundles on $\mathbb{P}^1$ with at least $5$ parabolic points and weights $(1/2)$ at all points is an elementary transformation. 
     \item Lastly, if the rank of the vector bundle is greater than $2$, the projectivization of the vector bundle is no longer a surface so we cannot simply see the process of obtaining a Hecke modification as a birational transformation of the surface using Castelnuovo's contraction theorem. In this case, the geometric interpretation is much more complicated.
\end{enumerate}
    
\end{rem}


\section{Directory for future works} 

We end this article proposing some directions for future works. 

\subsection*{$G$-bundles} 
The full article might be considered replacing $\GL_n$ by other split connected reductive group $G$ i.e., by considering $G$-bundles instead $\GL_n$-bundles. Thus, one might consider investigating the theory developed in this article in this new setting. In particular, considering $G$-bundles for every split connected reductive group is of special interest in the geometric Langlands program.

\subsection*{Level structure} Since $\GL_n$ can be replaced by a split connected reductive group $G$, one might also explore $G$-bundles with level structures. For definition and first properties of vector bundle with level structure, we refer to \cite{seshadri-82}.  In this new scenario, Hecke modifications parametrize the action of ramified Hecke operators on the space of ramified automorphic forms, where the ramification is induced by the level structure. In \cite{alvarenga-bonnel-24}, the authors investigate the scenario where the ramification is given by a closed point of degree one and describe the action of ramified Hecke operators using number-theoretic tools. A version of it using algebraic geometry i.e., Hecke modifications of vector bundles with level structure, would be of interest.  The connection between ramified automorphic forms and $G$-bundles with level structure is explained in \cite[p. 154]{lafforgue-18}. In \cite{nguy-11}, the author establishes Weil's theorem (Theorem \ref{weil-thm}) in this context. 

\subsection*{The Hecke stack} 
In order to keep our exposition elementary, 
we have vaguely mentioned the Hecke stack in Remark \ref{rem-heckestack}. However, a detailed investigation of its algebraic and geometric properties would be an interesting problem.

\subsection*{Quasi-Parabolic structures}
In Section \ref{sec-explicitHecke}, we highlighted the connection between Hecke modifications and quasi-parabolic structures. That correspondence strongly uses the fact that we are working with rank $2$ vector bundles. In upcoming work \cite{alvarenga-kaur-moco-25}, we generalize this correspondence to higher-rank vector bundles. 


\subsection*{Moduli embeddings}  With the above correspondence established -- namely, the connection between Hecke modifications and quasi-parabolic structures for higher rank vector bundles -- one might ask for a generalization of Theorem \ref{thm-boozer}. 

\subsection*{Higher genus curves I} As noted in Section \ref{sec-explicitHecke}, explicit descriptions of Hecke modifications for genus $0$ and $1$ curves are provided in \cite{alvarenga-moco-25} and \cite{alvarenga20}. However, even for rank $2$, the explicit description of Hecke modifications for higher genus curves remains completely open. Based on the low genus cases, it seems important to first develop a suitable classification of vector bundles before performing such modifications. For genus $2$ curves, Newstead provides a classification of stable vector bundles in \cite{newstead-68}. In \cite{desale-ramanan-76}, the authors generalize Newstead's work to hyperelliptic curves. However, the case of classification of stable vector bundles over an arbitrary curve of high genus is wide open.   

\subsection*{Higher genus curves II}
The success of the above item might open an opportunity to investigate the space of automorphic forms for higher genus curves. This means generalizing \cite{oliver-toroidal} and \cite{alvarenga-pereira-23} for higher genus curves. Since the genus $0$ case has trivial Hecke eigenspaces and the genus $1$ is the unique \textit{nontrivial} example known so far,  developing the theory for higher genus curves would be particularly valuable, allowing comparisons with the genus $1$ case. 

\subsection*{Hall algebras} As we explain in Section \ref{sec-explicitHecke}, over a finite field, the Hecke modifications yield some Hall numbers in the Hall algebra of the associated curve.  It would be interesting to investigate the implications of these Hall numbers for the Hall algebra structure, in the spirit of \cite[Sec. 3]{baumann-kassel-01}.  

\subsection*{Higher rank elementary transformations} The geometric interpretation of the Hecke modifications presented in Section \ref{sec-elementarytransf} works for rank $2$ vector bundles. Extending this interpretation to higher rank vector bundles would be another worthwhile effort.  

\section*{Acknowledgments} This work was financed, in part, by the São Paulo Research Foundation (FAPESP), Brasil. Process Number 2022/09476-7. The second author is grateful for a CAPES visiting fellowship that funded a stay at the ICMC-USP in July 2023, when discussions related to this work began.


\end{document}